\documentclass[12pt]{article}
\usepackage{amsmath,amsxtra,latexsym,amsthm,amssymb,amscd,cite,amsfonts}
\usepackage[portrait, top=3.5cm, bottom=3cm, left=3.5cm, right=2cm] {geometry}
\usepackage{xcolor}
\usepackage{graphicx}
\theoremstyle{plain}
\setlength{\textwidth}{6in} \setlength{\topmargin}{-0.2in}
\setlength{\textheight}{9.0in} \setlength{\oddsidemargin}{0.3in}

\newtheorem{ex}{\bf Example}[section]
\newtheorem{Remark}{\bf Remark}[section]
\newtheorem{Proposition}{\bf Proposition}[section]
\newtheorem{Definition}{\bf Definition}[section]
\newtheorem{Theorem}{\bf Theorem}[section]

\def\tto{\;{\lower 1pt \hbox{$\rightarrow$}}\kern -10pt
\hbox{\raise 2pt \hbox{$\rightarrow$}}\;}

\def\Bar{\overline}

\def\gph{\mbox{\rm gph}\,}

\def\dom{\mbox{\rm dom}\,}

\begin{document}
\pagestyle{myheadings}

\renewcommand{\theequation}{\thesection.\arabic{equation}}
\normalsize

\setcounter{equation}{0}

\title{Differential stability of convex optimization problems with possibly empty solution sets}

\author{D.T.V.~An\footnote{Department of Mathematics and Informatics, Thai Nguyen University of Sciences, Thai Nguyen city, Vietnam; email:
andtv@tnus.edu.vn.}\ \, and\ \ J.-C. Yao\footnote{ Center for General Education, China Medical University, 
			Taichung 40402, Taiwan; email: yaojc@mail.cmu.edu.tw.}}
\maketitle
\date{}

\medskip
\begin{quote}
\noindent {\bf Abstract.} 
As a complement to two recent papers by An and Yen \cite{AnYen2015}, and by An and Yao \cite{AnYao2016} on subdifferentials of the optimal value function of infinite-dimensional convex optimization problems, this paper studies the differential stability of convex optimization problems, where the solution set may be empty. By using a suitable sum rule for $\varepsilon$-subdifferentials, we obtain exact formulas for computing the $\varepsilon$-subdifferential of the optimal value function. Several illustrative examples are also given.

\noindent {\bf Keywords:} Parametric convex programming, optimal value function, conjugate function, $\varepsilon$-subdifferentials, $\varepsilon$-normal directions.

\noindent {\bf AMS Subject Classifications:}\ 49J53; 49Q12; 90C25;
90C31

\end{quote}

\section{Introduction}
\markboth{\centerline{\it Introduction}}{\centerline{\it D.T.V.~An
and J.-C.~Yao}} \setcounter{equation}{0}
Studying \textit{differential stability} of optimization problems usually means to study differentiability properties of the optimal value
function in parametric mathematical programming. We refer to  \cite{AnYao2016,AnYen2015,Mordukhovich_2006,Mordukhovich_Nam_Rector_Tran,MordukhovichEtAl_2009,Penot2013,Rockafellar_1970,Zanlinescu_2002} and the references therein for some old and new results in this direction.  

\medskip
According to Penot \cite[Chapter~3]{Penot2013}, the class of convex functions is an important class that enjoys striking and useful properties. The consideration of  \textit{directional derivative} makes it possible to reduce this class to the subclass of sublinear functions. This subclass is next to the family of linear functions in terms of simplicity: the epigraph of a sublinear function is a convex cone, a notion almost as simple and useful as the notion of linear subspace.

\medskip
Differential properties of convex functions
have been studied intensively in the last five decades. The fundamental contributions of J.-J.~Moreau and R.T.~Rockafellar have been widely recognized. Their results led to the beautiful theory of convex analysis \cite{Rockafellar_1970}. The derivative-like structure for convex functions, called \textit{subdifferentials}, is one of the main concepts in this theory. Subdifferentials generalize the derivatives to nonsmooth functions, which make them one of the most useful instruments in nonsmooth optimization.
\medskip

The concept of the \textit{$\varepsilon$-subdifferential} or\textit{ approximate subdifferential}  was first introduced by Br\o ndsted and Rockafellar in  \cite{Rockafellar_Brondsted_1965}. It has become an essential tool in convex analysis. For example, approximate minima and approximate subdifferentials are linked together by Legendre -Fenchel transforms (see, e.g.,\cite{Volle1994}).
Like for the subdifferential, calculus rules on the $\varepsilon$-subdifferential are of importance and attract the attention of many researchers; see, e.g., \cite{Hiriart_Urruty_1982,Hiriart_Urruty_1989,Hiriart_Moussaoui_Seeger_1995,Hiriart_Lemarechal_1993,Hiriart_Phelps_1993,MoussaouiSeeger1994,Seeger1994,Seeger1996,Volle1994,Zanlinescu_2002} and the references therein.

\medskip
In \cite{AnYen2015}, An and Yen presented formulas for computing the subdifferential of the optimal value function of convex optimization problems under inclusion constraints in a Hausdorff locally convex topological vector space setting. Afterwards, An and Yao~\cite{AnYao2016} obtained new results on subdifferential of the just mentioned function for problems under geometrical and functional constraints in Banach spaces. In both papers, the authors assumed that the original convex program has a nonempty solution set. A natural question arises: \textit{Is there any analogous version of the formulas given in \cite{AnYen2015,AnYao2016} for the case where the solution set can be empty?} 

\medskip
By using sum rules of the $\varepsilon$-subdifferentials from \cite{Hiriart_Urruty_1982} and appropriate regularity conditions, this paper presents formulas for the  $\varepsilon$-subdifferential of the optimal value function of convex optimization problems under inclusion constraints in Hausdorff locally convex topological vector spaces.

\medskip
The contents of the paper are as follows. Section~2 recalls several definitions and elementary results related to $\varepsilon$-subdifferentials of convex functions. Section~3 is devoted to a detailed analysis of several sum rules for $\varepsilon$-subdifferentials. Differential stability results of unconstrained and constrained convex optimization problems are established in Section~4. Several illustrative examples are also presented in this section.

\section{Preliminaries}
\markboth{\centerline{\it Preliminaries}}{\centerline{\it D.T.V.~An
and J.-C. Yao
}} \setcounter{equation}{0}
	Let $X$ and $Y$ be Hausdorff locally convex topological vector spaces whose topological duals are denoted, respectively, by $X^*$ and $Y^*$. Let $f:X \rightarrow \Bar{\Bbb{R}}$, where $\Bar{\Bbb{R}}:= [-\infty, + \infty ]= \mathbb{R} \cup \{+ \infty\}\cup \{-\infty\}$ is  an extended real-valued function.  One says that $f$ is \textit{proper} if the \textit{domain}
 		   $${\rm{dom}}\, f:=\{ x \in X \mid f(x) < +\infty\} $$
 		   is nonempty, and if $f(x) > - \infty$ for all $x \in X$. It is well known that if ${\rm{epi}}\,f$ of $f$ is convex, then $f$ is said to be a convex function, where $$ {\rm{epi}}\, f:=\{ (x, \alpha) \in X \times \mathbb{R} \mid \alpha \ge f(x)\}.$$ If ${\rm{epi}}\, f$ is a closed subset of $X \times \Bbb{R}$, $f$ is said to be a {\it closed} function. Denoting the set of all the neighborhoods of $x$ by $\mathcal{N}(x)$, one says that $f$ is {\it lower semicontinuous} (l.s.c.) at $x\in X$ if for every $\varepsilon >0$ there exists $U\in \mathcal{N}(x)$ such that $f(x') \ge f(x)-\varepsilon$ for any $x' \in U.$ If $f$ is l.s.c. at every $x\in X$, $f$ is said to be l.s.c. on $X$. It is easy to show that: {\it $f$ is l.s.c. on $X$ if and only if $f$ is closed and ${\rm dom}\, f$ is closed too}. 

\medskip
It is convenient to denote the set of all proper lower semicontinuous convex functions on $X$ by $\Gamma_0(X)$.
\begin{Definition}\rm
Let $f$ be a convex function defined on $X$, $\bar x\in {\rm dom}\,f$, and $\varepsilon \ge 0$. The $\varepsilon$-\textit{subdifferential} of $f$ at $\bar x$ is the set
$$ \partial_\varepsilon f(\bar x)=\{x^*\in X^* \mid \langle x^*, x-\bar x \rangle \le f(x)-f(\bar x)+\varepsilon, \ \, \forall x\in X \}.$$
\end{Definition}
The set $\partial_\varepsilon f(\bar x)$ reduces to the subdifferential $\partial f(\bar x)$ when $\varepsilon=0$. From the definition it follows that $\partial_\varepsilon f(\bar x)$ is a weakly$^*$-closed, convex set. In addition, for any nonnegative values $\varepsilon_1,\, \varepsilon_2$ with $\varepsilon_1 \le \varepsilon_2,$ one has $\partial_{\varepsilon_1} f(\bar x) \subset \partial_{\varepsilon_2} f(\bar x)$. Moreover,
$$\partial f(\bar x)=\partial_0 f(\bar x)=\bigcap\limits_{\varepsilon>0}\partial_\varepsilon f(\bar x).$$

If $f \in \Gamma_0(X)$, then $\partial_\varepsilon f(\bar x)$ is nonempty for every $\bar x \in {\rm dom}\,f$ and $\varepsilon > 0$ (see~\cite{Hiriart_Urruty_1982}). The following example shows that the traditional subdifferential $\partial f(\bar x)$ may be empty, while $\partial_\varepsilon f(\bar x)\neq\emptyset$ for all $\varepsilon > 0$. 
\begin{ex}\label{Ex_sqrt_minus}\rm
Let $X= \mathbb{R}$ and $\bar x=0$. Clearly, the function $f:X\to \overline{\mathbb{R}}$ given by
$$f(x)=\begin{cases}
- \sqrt{x}&\mbox{if}\ x \ge 0,\\
+\infty & \mbox{otherwise}
\end{cases}$$ belongs to $\Gamma_0(X)$ and $\bar x \in {\rm dom}\,f$.
For every $\varepsilon > 0$, one has
\begin{align*}
\partial_\varepsilon f(\bar x)&= \{x^* \in X^* \mid \langle x^*, x -\bar x \rangle \le f(x)-f(\bar x) + \varepsilon , \ \, \forall x \in X\}\\
&= \left\{x^* \in \mathbb{R} \mid  x^* x \le - \sqrt{x} + \varepsilon , \ \, \forall x \ge 0 \right\}\\
&=\left( -\infty,\, -\frac{1}{4\varepsilon}\right].
\end{align*}
Meanwhile, it is easy to verify that $\partial f(\bar x)= \emptyset.$
\end{ex}

In the sequel, we will also need the notion of conjugate function. By definition, the function $f^*: X^* \rightarrow  \overline{\Bbb{R}}$ given by
\begin{align*}
f^*(x^*)= \sup\limits_{x \in X} \left[ \langle x^*,x \rangle - f(x) \right], \quad x^* \in X^*,
\end{align*}
is said to be the \textit{conjugate function} (also called the \textit{Young--Fenchel transform}, the \textit{Legendre--Fenchel conjugate}) of $f:X\to \overline{\Bbb{R}}$. The conjugate function of $f^*$, denoted by $f^{**}$, is a function defined on $X$ and has values in $\overline{\Bbb{R}}$:
$$f^{**}(x)=\sup\limits_{x^* \in X^*}\left[ \langle x^*,x \rangle -f^*(x^*)\right] \quad (x \in X) .$$
Clearly, the function $f^{**}$ is convex and closed (in the sense that ${\rm epi}\,f^{**}$ is closed in the weak topology of $X\times \mathbb{R}$ or, in other words, $f^{**}$ is lower semicontinuous w.r.t. the weak topology of $X$). According to the Fenchel--Moreau theorem (see \cite[Theorem~1, p.~175]{Ioffe_Tihomirov_1979}), \textit{if $f$ is a function on $X$ everywhere greater than $-\infty$, then $f=f^{** }$ if and only if $f$ is closed and convex.}

\medskip
According to \cite{Hiriart_Urruty_1982}, there are two basic ways to describe $\partial_\varepsilon f(\bar x)$: 
\par (a) Via the conjugate function $f^*$ of $f$;

\par (b) Via the support function $\delta^*(x;\partial_\varepsilon f(\bar x)):=\sup\{\langle x^*,x\rangle\mid x^* \in \partial_\varepsilon f(\bar x) \}$ of $\partial_\varepsilon f(\bar x)$.

\begin{Proposition}\label{Proposition_1} {\rm({See \cite[Propositions 1.1 and 1.2]{Hiriart_Urruty_1982}})} The following holds:
	
		{\rm (i)} If $\bar x\in {\rm dom}\,f$ and $\varepsilon \ge 0$, then
		$$x^*\in \partial_\varepsilon f(\bar x)\ \, \Longleftrightarrow\ \, f^*(x^*)+f(\bar x)\le \langle x^*, \bar x \rangle +\varepsilon. $$
		
	{\rm (ii)} If $f\in \Gamma_0(X),$ $\bar x\in {\rm dom}\,f$ and $\varepsilon \ge 0$, then
		$$\delta^*(v; \partial_\varepsilon f(\bar x))=\inf_{t>0}\dfrac{f(\bar x+tv)-f(\bar x)+\varepsilon}{t}\quad\, (v\in X).$$
\end{Proposition}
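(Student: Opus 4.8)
I would handle the two assertions separately. Part~(i) is a direct rewriting of Definition~2.1: $x^*\in\partial_\varepsilon f(\bar x)$ says precisely that $\langle x^*,x\rangle-f(x)\le\langle x^*,\bar x\rangle-f(\bar x)+\varepsilon$ for all $x\in X$. Since $\bar x\in\dom f$ the value $f(\bar x)$ is finite, so taking the supremum over $x\in X$ of the left-hand side is legitimate and turns this into $f^*(x^*)\le\langle x^*,\bar x\rangle-f(\bar x)+\varepsilon$, i.e.\ $f^*(x^*)+f(\bar x)\le\langle x^*,\bar x\rangle+\varepsilon$; conversely that inequality bounds every $\langle x^*,x\rangle-f(x)$ by $f^*(x^*)$ and so reinstates the defining family. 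The only thing needing a word is finiteness of $f(\bar x)$, which justifies the passage to the supremum.

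For (ii) I would first note the easy inequality and then normalize. For $x^*\in\partial_\varepsilon f(\bar x)$ and $t>0$, the choice $x=\bar x+tv$ in the defining inequality gives $\langle x^*,v\rangle\le\big(f(\bar x+tv)-f(\bar x)+\varepsilon\big)/t$, and taking the supremum over $x^*$ followed by the infimum over $t>0$ yields ``$\le$'' in (ii) (trivially so if $\partial_\varepsilon f(\bar x)=\emptyset$). For the converse I would replace $f$ by $g:=f(\cdot+\bar x)-f(\bar x)$, which again lies in $\Gamma_0(X)$ and satisfies $g(0)=0$, $\partial_\varepsilon g(0)=\partial_\varepsilon f(\bar x)$ and $\inf_{t>0}\big(g(tv)+\varepsilon\big)/t=$ the right-hand side of (ii). Part~(i) gives $\partial_\varepsilon g(0)=\{x^*\in X^*\mid g^*(x^*)\le\varepsilon\}$, so it remains to prove
$$\sup\{\langle x^*,v\rangle\mid g^*(x^*)\le\varepsilon\}\ \ge\ \inf_{t>0}\frac{g(tv)+\varepsilon}{t}.$$

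This is the crux, and I would derive it by Lagrangian duality for the problem $\sup\{\langle x^*,v\rangle: g^*(x^*)-\varepsilon\le0\}$. By Young's inequality $g^*\ge0$, and by the Fenchel--Moreau theorem ($g\in\Gamma_0(X)$, hence $g=g^{**}$) one has $\inf_{x^*}g^*(x^*)=-g^{**}(0)=-g(0)=0<\varepsilon$, so a strictly feasible (Slater) point exists; this is where $\varepsilon>0$ is used. Standard convex duality then yields
$$\sup_{g^*(x^*)\le\varepsilon}\langle x^*,v\rangle=\inf_{\lambda\ge0}\Big[\lambda\varepsilon+\sup_{x^*\in X^*}\big(\langle x^*,v\rangle-\lambda g^*(x^*)\big)\Big].$$
The bracket equals $\delta^*(v;X^*)=+\infty$ when $\lambda=0$ (unless $v=0$, in which case (ii) is immediate with both sides $0$); for $\lambda>0$ a short conjugacy computation — factor out $\lambda$ and recognize the conjugate of $g^*$ evaluated at $v/\lambda$ — gives $\sup_{x^*}\big(\langle x^*,v\rangle-\lambda g^*(x^*)\big)=\lambda g^{**}(v/\lambda)=\lambda g(v/\lambda)$, so the bracket is $\lambda\big(\varepsilon+g(v/\lambda)\big)$. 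Substituting $t=1/\lambda$ transforms $\inf_{\lambda>0}\lambda\big(\varepsilon+g(v/\lambda)\big)$ into $\inf_{t>0}\big(g(tv)+\varepsilon\big)/t$; together with ``$\le$'' this proves (ii) (for $\varepsilon>0$; the case $\varepsilon=0$ gives the classical support-function formula for $\partial f(\bar x)$, which holds under the extra requirement that $\partial f(\bar x)\ne\emptyset$).

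The step I expect to be the main obstacle is the strong-duality identity, i.e.\ the interchange of $\sup_{x^*}$ and $\inf_{\lambda\ge0}$ in a general Hausdorff locally convex space; this is exactly where $g\in\Gamma_0(X)$ (so that $g^*$ is a proper, convex, weak$^*$-lower semicontinuous function with $g^{**}=g$) and $\varepsilon>0$ (producing the Slater point $\inf g^*=0<\varepsilon$) are needed, and for a single real convex inequality constraint possessing a strictly feasible point the absence of a duality gap is classical. A duality-free alternative for ``$\ge$'' would separate the closed convex set $\epi f$ from the ray $\{(\bar x+tv,\ f(\bar x)+\beta t-\varepsilon):t\ge0\}$ — which $\epi f$ does not meet whenever $\beta<\inf_{t>0}\big(f(\bar x+tv)-f(\bar x)+\varepsilon\big)/t$ — and then extract $x^*$ from a non-vertical separating functional on $X\times\R$; there the subtle point is exactly the exclusion of a vertical separating hyperplane, which the Slater condition renders automatic in the first approach.
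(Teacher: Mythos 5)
The paper does not actually prove this proposition; it is quoted from Hiriart-Urruty's Propositions 1.1 and 1.2 (reference \cite{Hiriart_Urruty_1982}) with no argument supplied, so there is no in-paper proof to compare against. Judged on its own, your proof is correct and follows what is essentially the classical route. Part (i) is indeed nothing more than reading the defining inequality of $\partial_\varepsilon f(\bar x)$ through the supremum that defines $f^*$, with finiteness of $f(\bar x)$ the only point to note. For part (ii), the reduction to $g=f(\cdot+\bar x)-f(\bar x)$, the identification $\partial_\varepsilon g(0)=\{x^*\mid g^*(x^*)\le\varepsilon\}$ via (i), and the Lagrangian computation --- Slater point from $\inf g^*=-g^{**}(0)=0<\varepsilon$, the conjugacy identity $\sup_{x^*}\big(\langle x^*,v\rangle-\lambda g^*(x^*)\big)=\lambda g(v/\lambda)$ via Fenchel--Moreau, and the substitution $t=1/\lambda$ --- are all sound; the one external ingredient, absence of a duality gap for a single scalar convex constraint under Slater's condition, genuinely reduces to separation in $\mathbb{R}^2$ and is fair to invoke as classical (your alternative epigraph-separation sketch is the same argument in primal form). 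One point you flag deserves emphasis rather than apology: your argument needs $\varepsilon>0$, and the formula in (ii) as printed for all $\varepsilon\ge 0$ is in fact false at $\varepsilon=0$ in general. Example 2.1 of the paper already witnesses this: there $\partial f(0)=\emptyset$, so $\delta^*(v;\partial f(0))=\sup\emptyset=-\infty$ for every $v$, while $\inf_{t>0}\big(f(tv)-f(0)\big)/t=+\infty$ for $v<0$. Hiriart-Urruty states his Proposition 1.2 for $\varepsilon>0$ only, and the paper's ``$\varepsilon\ge 0$'' in (ii) is an overstatement (harmless downstream, since only part (i) is used in the proofs of Section 4); your caveat is correct and is not a defect of your proof.
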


To deal with constrained optimization problems, we will need some results on $\varepsilon$-normal directions from \cite{Hiriart_Urruty_1989}. Let $C$ be a nonempty convex set in a Hausdorff locally convex topological vector space $X$. 
\begin{Definition} \rm The set $N_\varepsilon(\bar x;C)$ of $\varepsilon$-\textit{normal directions} to $C$ at  $\bar x \in C$ is defined by
	\begin{align*}
	N_\varepsilon(\bar x;C)=\{x^*\in X^*\mid \langle x^*, x-\bar x \rangle \le \varepsilon, \ \forall x\in C \}. 
	\end{align*}
\end{Definition}

As usual, the indicator function $\delta(\cdot; C)$ of $C$ is defined by setting $\delta (x; C)=0$ if $x \in C$ and $\delta (x; C)=+\infty$ if $x \notin C$. It is easy to see that $N_\varepsilon(\bar x; C)= \partial_{\varepsilon}\delta (\bar x; C) $ for every $\varepsilon \ge 0$. Moreover, when $\varepsilon=0$, $N_\varepsilon(\bar x;C)$ reduces to the normal cone of $C$ at $\bar x$, which is denoted by $N(\bar x;C)$. However, as a general rule, $	N_\varepsilon(\bar x;C)$ is not a cone when $\varepsilon > 0$.

\medskip
The \textit{polar set} of $A\subset X$ is defined by 
\begin{align*}
A^0=\{ x^*\in X^*\mid \langle x^*, x \rangle \le 1 , \ \forall x \in A \}.
\end{align*}  

\begin{Proposition}\label{proposition_normal_set}{\rm (See \cite[p.~222]{Hiriart_Urruty_1989})}
	The following properties of $\varepsilon$-normal directions are valid: \par {\rm (i)} $N_\varepsilon (x; C)= \varepsilon (C- x)^0$ for any  $x\in C$ and $\varepsilon >0$;\par
	{\rm (ii)} $N(x; C)=\bigcap\limits_{\eta>0} \eta N_\varepsilon (x ; C)$ for any  $x\in C$  and $\varepsilon \ge 0.$
\end{Proposition}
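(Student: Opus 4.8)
The plan is to prove (i) by directly rescaling the inequalities that define the two sets, and then to obtain (ii) by feeding (i) into the elementary description of an infinite intersection of dilated polar sets, with the boundary case $\varepsilon=0$ handled separately.

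For (i), fix $x\in C$ and $\varepsilon>0$. I would start from the definitions: $x^*\in N_\varepsilon(x;C)$ means $\langle x^*,c-x\rangle\le\varepsilon$ for all $c\in C$, whereas $z^*\in(C-x)^0$ means $\langle z^*,c-x\rangle\le 1$ for all $c\in C$. Since $\varepsilon>0$ we may divide by $\varepsilon$, so $x^*\in N_\varepsilon(x;C)$ if and only if $\langle\varepsilon^{-1}x^*,c-x\rangle\le 1$ for all $c\in C$, i.e. $\varepsilon^{-1}x^*\in(C-x)^0$, i.e. $x^*\in\varepsilon(C-x)^0$. This chain of equivalences is the asserted equality.

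For (ii), consider first $\varepsilon>0$. By (i), $\eta N_\varepsilon(x;C)=\eta\varepsilon(C-x)^0$, and since $\lambda:=\eta\varepsilon$ runs over all of $(0,\infty)$ as $\eta$ does, we get $\bigcap_{\eta>0}\eta N_\varepsilon(x;C)=\bigcap_{\lambda>0}\lambda(C-x)^0$. I would then record the general fact that, for any $A\subset X$, $\bigcap_{\lambda>0}\lambda A^0=\{x^*\in X^*\mid\langle x^*,a\rangle\le 0\ \ \forall a\in A\}$: membership of $x^*$ in $\lambda A^0$ says $\langle x^*,a\rangle\le\lambda$ for every $a\in A$, and imposing this for all $\lambda>0$ is equivalent to $\langle x^*,a\rangle\le 0$ for every $a\in A$, while the reverse inclusion is immediate. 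Taking $A=C-x$ identifies the right-hand side with $N(x;C)$. For the case $\varepsilon=0$, where (i) is not available, I would instead use that $N_0(x;C)=N(x;C)$ is a cone, so $\eta N_0(x;C)=N(x;C)$ for every $\eta>0$ and the intersection is again $N(x;C)$.

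I do not expect a genuine obstacle: the proposition is a bookkeeping consequence of the definitions and of the classical correspondence between polar sets and normal cones. The only places that call for a little attention are the use of $\varepsilon>0$ to justify the division in (i), the re-indexing $\lambda=\eta\varepsilon$ in (ii), and the separate treatment of $\varepsilon=0$ via the conical character of $N(x;C)$.
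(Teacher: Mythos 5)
Your proof is correct. The paper does not supply its own argument for this proposition --- it is quoted from Hiriart-Urruty's 1989 paper with only a citation --- and your derivation (rescaling the defining inequalities for (i), re-indexing $\lambda=\eta\varepsilon$ together with the identity $\bigcap_{\lambda>0}\lambda A^0=\{x^*\mid\langle x^*,a\rangle\le 0,\ \forall a\in A\}$ for (ii), and the conical character of $N(x;C)$ for the case $\varepsilon=0$) is exactly the standard bookkeeping one would expect; no gaps.
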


The first assertion of Proposition \ref{proposition_normal_set} shows that the set of the  $\varepsilon$-normal directions $N_\varepsilon (x; C)$ can be computed via the polar set of a set containing 0. Provided that  the set $N_\varepsilon (x; C)$ has been found, by using the second assertion of Proposition \ref{proposition_normal_set}, one can compute the normal cone $N(x; C)$. Due to the importance of the polar sets of sets containing the origin, it is reasonable to consider an illustrative example. Let $X=\mathbb{R}^2$ and $\overline{B}_{\mathbb{R}^2}$ be the unit closed ball in $\mathbb{R}^2.$

\begin{ex}\rm
	Consider the set  $A=\overline{B}((0,1); 1 )=\{ (x_1,x_2)\in \mathbb{R}^2 \mid x_1^2 +(x_2-1)^2\le 1\}$, we have
	$A^0= \{x^*=(x_1^*,x_2^*) \in \mathbb{R}^2 \mid x_1^* + ||x^*|| \le 1 \},$ where $||x^*||=\sqrt{x_1^{*2}+{x_2^{*2}}}$. Indeed, since $A=(1,0)+ \overline{B}_{\mathbb{R}^2},$ we have \begin{align*}
	A^0&=\{x^*=(x_1^*,x_2^*) \in \mathbb{R}^2 \mid \langle (x_1^*,x_2^*), (1,0)+v\rangle \le 1 , \ \forall v \in \overline{B}_{\mathbb{R}^2} \}\\
	&= \{x^* \in \mathbb{R}^2 \mid x_1^* + ||x^*|| \le 1 \}.
	\end{align*}
	\begin{figure}[htbp]
		\centering
		\includegraphics[scale=.40]{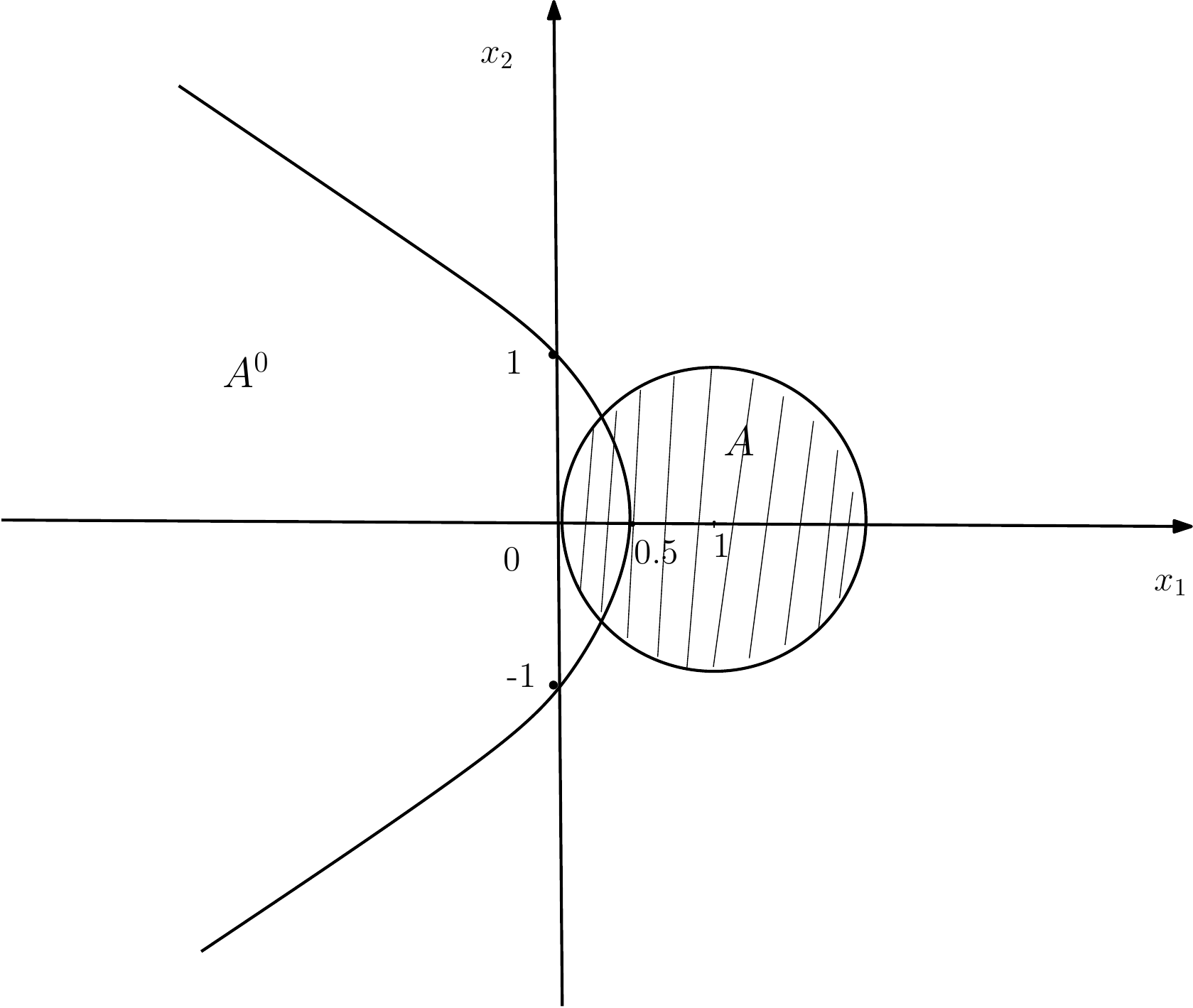}
		\centerline{Figure 1: The polar set of $A$.}
	\end{figure}
\end{ex}

Now, consider a proper convex function $f: X \to \overline{\mathbb{R}}$ and suppose that $\bar x \in {\rm dom}\, f$. The relationship between $\partial_\varepsilon f(\bar x)$ and $N_\varepsilon ((\bar x, f(\bar x));{\rm{epi}}\, f )$ is described  \cite[p.~224]{Hiriart_Urruty_1989} as follows:
\begin{align}\label{varepsilon-normals} \partial_{\varepsilon} f( \bar x)=\big \{x^* \in X^* \mid (x^*,-1)\in N_\varepsilon ((\bar x, f(\bar x));{\rm{epi}}\, f )  \big\}\quad (\varepsilon\geq 0).
\end{align}
Taking $\varepsilon=0$, from \eqref{varepsilon-normals} we recover the following fundamental formula in convex analysis, which relates subdifferentials of a given convex function to the normal cones of its epigraph:
\begin{align*}\partial f(x)=\big \{x^* \in X^* \mid (x^*,-1)\in N((x, f(x));{\rm{epi}}\, f )  \big\}\quad (\forall x\in\dom f).
\end{align*}

\section{Sum rules for $\varepsilon$-subdifferentials}
In convex analysis and optimization, summing two functions is a key operation. The Moreau--Rockafellar Theorem can be viewed as a well-known result, which describes the subdifferential of the sum of two subdifferentiable functions. Invoking a result on the infimal convolution of two functions, one gets a sum rule for $\varepsilon$-subdifferentials.
In the sequel, we will need next fundamental sum rule for $\varepsilon$-subdifferentials.

\begin{Theorem}{\rm (See \cite[Theorem 2.1]{Hiriart_Urruty_1982})}
	\label{sum_rule}
	Suppose that $f_1, f_2: X \to \overline{\Bbb{R}}$ are two proper convex functions on a Hausdorff locally convex topological vector space $X$ and the qualification condition
	\begin{align}
	\label{ConditionH}
	 (f_1\!+\!f_2)^*(x^*)\!=\!\min \big \{ f_1^*(x_1^*)\!+\!f_2^*(x_2^*) \mid x_1^*,\, x_2^*\in X^*,\; x_1^*+x_2^*=x^*\}\ \, (\forall x^*\in X^*)
	\end{align}		
	holds. Then, for every $\bar x \in {\rm dom}\, f_1 \cap {\rm dom}\, f_2$ and $\varepsilon > 0$, one has
	\begin{align}\label{SumRule}
	\partial_\varepsilon (f_1+f_2)(\bar x)=
	\displaystyle\bigcup_{\begin{subarray}{c} \varepsilon_1 \ge 0,\; \varepsilon_2 \ge 0,\\ \varepsilon_1 + \varepsilon_2=\varepsilon \end{subarray}} \big\{  \partial_{\varepsilon_1}f_1(\bar x)+ \partial_{\varepsilon_2}f_2(\bar x)\big\}.
	\end{align}
\end{Theorem}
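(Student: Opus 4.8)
The plan is to prove the two inclusions in \eqref{SumRule} separately, noting from the outset that only ``$\subseteq$'' will use the qualification condition \eqref{ConditionH}. The inclusion ``$\supseteq$'' is elementary: if $\varepsilon_1,\varepsilon_2\ge 0$, $\varepsilon_1+\varepsilon_2=\varepsilon$, $x_1^*\in\partial_{\varepsilon_1}f_1(\bar x)$ and $x_2^*\in\partial_{\varepsilon_2}f_2(\bar x)$, then adding the two defining inequalities of the $\varepsilon_i$-subdifferentials at an arbitrary $x\in X$ gives
$$\langle x_1^*+x_2^*,\,x-\bar x\rangle\ \le\ (f_1+f_2)(x)-(f_1+f_2)(\bar x)+\varepsilon\qquad(\forall x\in X),$$
i.e. $x_1^*+x_2^*\in\partial_\varepsilon(f_1+f_2)(\bar x)$; this step needs neither \eqref{ConditionH} nor $\varepsilon>0$.

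For ``$\subseteq$'' I would start from an arbitrary $x^*\in\partial_\varepsilon(f_1+f_2)(\bar x)$. Since $f_1,f_2$ are proper convex and $\bar x\in\dom f_1\cap\dom f_2$, the function $f_1+f_2$ is proper convex with $\bar x$ in its domain, so Proposition~\ref{Proposition_1}(i) rewrites the membership as $(f_1+f_2)^*(x^*)+(f_1+f_2)(\bar x)\le\langle x^*,\bar x\rangle+\varepsilon$. Now I invoke \eqref{ConditionH} to split $x^*$: there exist $x_1^*,x_2^*\in X^*$ with $x_1^*+x_2^*=x^*$ and $(f_1+f_2)^*(x^*)=f_1^*(x_1^*)+f_2^*(x_2^*)$. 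Substituting this, and breaking $(f_1+f_2)(\bar x)$ and $\langle x^*,\bar x\rangle$ up along the same splitting, the previous inequality becomes $\eta_1+\eta_2\le\varepsilon$, where $\eta_i:=f_i^*(x_i^*)+f_i(\bar x)-\langle x_i^*,\bar x\rangle$. By the Fenchel--Young inequality — using that $f_i(\bar x)$ is finite — each $\eta_i\ge 0$, and reading Proposition~\ref{Proposition_1}(i) from right to left gives $x_i^*\in\partial_{\eta_i}f_i(\bar x)$. To fit the union exactly as displayed in \eqref{SumRule}, I absorb the nonnegative leftover slack by setting $\varepsilon_1:=\eta_1+(\varepsilon-\eta_1-\eta_2)$ and $\varepsilon_2:=\eta_2$; then $\varepsilon_1,\varepsilon_2\ge 0$, $\varepsilon_1+\varepsilon_2=\varepsilon$, and by monotonicity of the $\varepsilon$-subdifferential in $\varepsilon$ we get $x^*=x_1^*+x_2^*\in\partial_{\varepsilon_1}f_1(\bar x)+\partial_{\varepsilon_2}f_2(\bar x)$, which finishes the inclusion.

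The real content of the theorem is packed into its hypothesis, so I expect no genuine obstacle in writing this out: the only step that would break down for general proper convex $f_1,f_2$ is the exact decomposition of the conjugate value $x^*=x_1^*+x_2^*$, and that is precisely what \eqref{ConditionH} (equivalently, exactness of the infimal convolution of $f_1^*$ and $f_2^*$) supplies. Everything else is the Fenchel--Young inequality, the conjugate-function description of the $\varepsilon$-subdifferential from Proposition~\ref{Proposition_1}(i), and the trivial monotonicity $\partial_{\varepsilon'}g(\bar x)\subseteq\partial_{\varepsilon''}g(\bar x)$ for $\varepsilon'\le\varepsilon''$. The only care needed is the bookkeeping around the slacks — checking $\eta_i\ge 0$ and then redistributing $\varepsilon-\eta_1-\eta_2$ so that the indices in the union add up to $\varepsilon$ rather than merely being $\le\varepsilon$.
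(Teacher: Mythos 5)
Your proof is correct and complete. The paper states Theorem \ref{sum_rule} without proof, citing Hiriart-Urruty, so there is no in-paper argument to compare against; your derivation --- the elementary additive inclusion in one direction and, for the reverse inclusion, the characterization of Proposition \ref{Proposition_1}(i) combined with the exact decomposition $(f_1+f_2)^*(x^*)=f_1^*(x_1^*)+f_2^*(x_2^*)$ supplied by \eqref{ConditionH}, the Fenchel--Young nonnegativity of the slacks $\eta_i$, and the redistribution of the leftover $\varepsilon-\eta_1-\eta_2$ --- is precisely the standard argument of the cited reference and matches the paper's own reading of \eqref{ConditionH} as exactness of the infimal convolution $f_1^*\oplus f_2^*$. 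The only point worth making explicit is that the attained minimum in \eqref{ConditionH} is finite for the relevant $x^*$: since $x^*\in\partial_\varepsilon(f_1+f_2)(\bar x)$ and $(f_1+f_2)(\bar x)$ is finite, Proposition \ref{Proposition_1}(i) forces $(f_1+f_2)^*(x^*)<+\infty$, while properness of $f_1,f_2$ gives $f_i^*(x_i^*)>-\infty$, so each $f_i^*(x_i^*)$ is finite and your quantities $\eta_i$ are well defined.
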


Condition \eqref{ConditionH} means that, for every $x^*\in X^*$, one has \begin{align}\label{Explain_H}
(f_1+f_2)^*(x^*)=\inf \big \{ f_1^*(x_1^*)+f_2^*(x_2^*) \mid x_1^*,\, x_2^*\in X^*,\, x_1^*+x_2^*=x^*\},\end{align} and the infimum is attained, i.e., there exist $\bar x_1^*,\,\bar x_2^*$ from $X^*$ with $\bar x_1^*+\bar x_2^*=x^*$ such that
\begin{align}\label{Explain_H_1} f_1^*(\bar x_1^*) +f_2^*(\bar x_2^*)=\inf\big \{ f_1^*(x_1^*)+f_2^*(x_2^*) \mid x_1^*+x_2^*=x^*\}.\end{align}

A deeper understanding of condition \eqref{ConditionH} is achieved via the notion of infimal convolution \cite[p.~168]{Ioffe_Tihomirov_1979} of convex functions.

\medskip
The \textit{infimal convolution} $f_1\oplus f_2$ of proper convex functions $f_1:X \rightarrow \Bar{\Bbb{R}}$ and $f_2:X \rightarrow \Bar{\Bbb{R}}$ is defined by
\begin{align*}
(f_1\oplus f_2) (x):=\inf \big \{ f_1(x_1)+f_2(x_2) \mid x_1+x_2=x \}\quad (x\in X).
\end{align*}	
 Applying this construction to the functions $f_1^*:X^* \rightarrow \Bar{\Bbb{R}}$ and $f_2^*:X^* \rightarrow \Bar{\Bbb{R}}$, we have
\begin{align}
\label{infimal_convolution_dual}
(f_1^*\oplus f_2^*) (x^*)=\inf \big \{ f_1^*(x_1^*)+f_2^*(x_2^*) \mid x_1^*+x_2^*=x^*\}.
\end{align}	The attainment of the  infimum on the right-hand-side of \eqref{infimal_convolution_dual} at a point $x^*$ is a kind of \textit{qualification} on the functions $f_1$, $f_2$ in a dual space setting.  The writing $(f_1^*\oplus f_2^*) (x^*)=\min \big \{ f_1^*(x_1^*)+f_2^*(x_2^*) \mid x_1^*+x_2^*=x^*\}$ means that there exist $\bar x_1^*,\, \bar x_2^*$ from $X^*$ with $x^*= \bar x_1^*+\bar x_2^*$ and $(f^*_1\oplus f^*_2) (x^*)=f_1^*(\bar x_1^*) +f_2^*(\bar x_2^*)$.

\medskip
According to \cite[p.~168]{Ioffe_Tihomirov_1979}, the infimal convolution of proper convex functions is a convex function. However, the latter can fail to be proper. For example, if $f_1$ and $f_2$ are linear functions not equal to one another, then their infimal convolution is identically $-\infty$.
\medskip

By the definition of conjugate function, we have
\begin{align*}
(f_1+f_2)^*(x^*)=\sup\limits_{x \,\in\, X} \big\{\langle x^*,x \rangle -(f_1+f_2)(x)   \big \}.
\end{align*}
So, substituting $x^*=x_1^*+x_2^*$ with $x_1^*\in X^*$ and $x_2^*\in X^*$ yields
\begin{align*}
(f_1+f_2)^*(x^*)&=\sup\limits_{x \,\in\, X} \big\{\langle x^*_1+x_2^*,x \rangle -f_1(x)-f_2(x)   \big \}\\
&= \sup\limits_{x \,\in\, X} \big\{\langle x^*_1,x \rangle -f_1(x) + \langle x_2^*, x \rangle -f_2(x)   \big \}\\
&\le \sup\limits_{x \,\in\, X} \big\{\langle x^*_1,x \rangle -f_1(x)\big \} + \sup\limits_{x \,\in\, X}\big\{\langle x_2^*, x \rangle -f_2(x)   \big \}.
\end{align*}
Thus, the inequality
\begin{align}\label{sum_conjugate_function}
(f_1+f_2)^*(x^*)\le f_1^*(x_1^*)+f_2^*(x_2^*)
\end{align}
holds for all $x^*, x^*_1, x^*_2\in X^*$ satisfying $x^*=x_1^*+x_2^*$.  For any $x^* \in X^*$, taking infimum of both sides of \eqref{sum_conjugate_function} on the set of all  $(x^*_1, x^*_2)$ with $x_1^*+x_2^*=x^*$, we get
\begin{align}
\label{analysis_condition}
(f_1+f_2)^*(x^*) \le (f^*_1\oplus f^*_2) (x^*);
\end{align} see \cite[p.~181]{Ioffe_Tihomirov_1979}. Since \eqref{Explain_H} can be rewritten as
\begin{align}\label{Explain_H_2}
(f_1+f_2)^*(x^*) = (f^*_1\oplus f^*_2) (x^*),
\end{align} condition \eqref{ConditionH} requires that, for the functions $f_1$ and $f_2$ in question, the inequality in \eqref{analysis_condition} \textit{holds as equality} for all $x^*\in X^*$. Luckily, this requirement is satisfied under some verifiable regularity conditions. The following theorem describes a condition of this type.

\begin{Theorem}{\rm (See \cite[Theorem 1, p.~178]{Ioffe_Tihomirov_1979})} \label{I_T_theorem}
Suppose that $f_1,f_2$ are proper convex functions. If 
\begin{align}\label{MR_condition}
	\begin{cases}
	one\ of\ the\ functions\ f_1,f_2\ is\ continuous\ at\ a\ point\ belonging\\
	to\ the\ effective\ domain\ of\ the\ other,
	\end{cases} 
\end{align}
then the equality 
$(f_1+f_2)^*(x^*) = (f^*_1\oplus f^*_2) (x^*)$ holds for every $x^*\in X^*$. Moreover, for every $x^* \in {\rm \dom}\, (f_1+f_2)^*$, there exist points $\bar x_i^*\in {\rm \dom}\, f_i^*,\, i=1,2$, such that
$\bar x_1^*+\bar x_2^*=x^*$ and 
\begin{align*}
f_1^*(\bar x_1^*)+f_2^*(\bar x_2^*)=(f_1+f_2)^*(x^*).
\end{align*}
\end{Theorem}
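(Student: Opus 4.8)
The plan is to prove the two assertions simultaneously by a Fenchel-type separation argument; one of the inequalities comes for free. Indeed, \eqref{analysis_condition} already records that $(f_1+f_2)^*(x^*)\le(f_1^*\oplus f_2^*)(x^*)$ for every $x^*\in X^*$, so if $x^*\notin\dom(f_1+f_2)^*$ both sides equal $+\infty$ and there is nothing to prove. Fix therefore $x^*\in\dom(f_1+f_2)^*$ and set $\alpha:=(f_1+f_2)^*(x^*)$; since $f_1+f_2$ is proper — its domain contains the point $x_0$ at which (say) $f_1$ is continuous and which belongs to $\dom f_2$ by \eqref{MR_condition} — the number $\alpha$ is real. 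Unravelling the definition of $\alpha$ gives
\begin{align*}
f_1(x)+f_2(x)\ \ge\ \langle x^*,x\rangle-\alpha\qquad(\forall x\in X).
\end{align*}

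Next I would separate, inside $X\times\mathbb{R}$, the strict epigraph of $f_1$ from the hypograph of the concave function $x\mapsto\langle x^*,x\rangle-\alpha-f_2(x)$; that is, I set
\begin{align*}
A=\{(x,r)\mid r>f_1(x)\}\qquad\text{and}\qquad B=\{(x,r)\mid r\le\langle x^*,x\rangle-\alpha-f_2(x)\}.
\end{align*}
These are convex and nonempty (both contain a point over $x_0$), and the displayed inequality says precisely that $A\cap B=\emptyset$. The continuity of $f_1$ at $x_0$ makes $f_1$ bounded above on a neighbourhood of $x_0$, hence $A$ has nonempty interior. Applying the Hahn--Banach separation theorem to $A$ and $B$ produces $(y^*,\lambda)\in X^*\times\mathbb{R}\setminus\{(0,0)\}$ and $c\in\mathbb{R}$ with $\langle y^*,x\rangle+\lambda r\le c\le\langle y^*,x'\rangle+\lambda r'$ for all $(x,r)\in A$ and all $(x',r')\in B$.

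It remains to normalize and read off the conjugates. Sending $r\to+\infty$ on the left (and $r'\to-\infty$ on the right) forces $\lambda\le0$; and $\lambda=0$ is impossible, for then $y^*\ne0$ and $\langle y^*,x\rangle\le c\le\langle y^*,x'\rangle$ for all $x\in\dom f_1$ and $x'\in\dom f_2$, which, taking $x'=x_0$ and letting $x$ range over a neighbourhood of $x_0$ contained in $\dom f_1$, gives $\langle y^*,x-x_0\rangle\le0$ near $x_0$, hence $y^*=0$, a contradiction. So $\lambda<0$, and we rescale to $\lambda=-1$. Letting $r\downarrow f_1(x)$ in $\langle y^*,x\rangle-r\le c$ yields $f_1^*(y^*)\le c$, and choosing $r'=\langle x^*,x'\rangle-\alpha-f_2(x')$ in $\langle y^*,x'\rangle-r'\ge c$ yields $f_2^*(x^*-y^*)\le\alpha-c$. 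Thus $\bar x_1^*:=y^*$ and $\bar x_2^*:=x^*-y^*$ satisfy $\bar x_1^*+\bar x_2^*=x^*$ and $f_1^*(\bar x_1^*)+f_2^*(\bar x_2^*)\le\alpha$; combined with \eqref{sum_conjugate_function} this must be an equality, so $(f_1^*\oplus f_2^*)(x^*)=\alpha=(f_1+f_2)^*(x^*)$ with the infimum attained at $(\bar x_1^*,\bar x_2^*)$, and finiteness of $\alpha$ (together with properness of $f_i$) forces $\bar x_i^*\in\dom f_i^*$.

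I expect the separation step to be the real obstacle: the hypothesis \eqref{MR_condition} is used twice and essentially — once to guarantee that one of the two convex bodies has nonempty interior, so that Hahn--Banach applies at all, and once to exclude the degenerate "vertical" separating hyperplane $\lambda=0$. The two limit passages in $r$ and the bookkeeping with the constant $c$ are then routine.
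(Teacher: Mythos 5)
Your argument is correct. Note that the paper does not prove this statement at all: it is imported verbatim from Ioffe--Tihomirov \cite[Theorem 1, p.~178]{Ioffe_Tihomirov_1979}, so there is no in-paper proof to compare against. What you give is essentially the classical proof of that cited result: reduce to $x^*\in\dom(f_1+f_2)^*$ via \eqref{analysis_condition}, separate the strict epigraph of $f_1$ from the hypograph of $x\mapsto\langle x^*,x\rangle-\alpha-f_2(x)$ in $X\times\mathbb{R}$, and read off $f_1^*(y^*)\le c$ and $f_2^*(x^*-y^*)\le\alpha-c$, which together with \eqref{sum_conjugate_function} yield both the equality and the attainment. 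The two uses of \eqref{MR_condition} that you flag (nonempty interior of the epigraph so Hahn--Banach applies, and exclusion of a vertical separating hyperplane) are exactly the right places where the hypothesis enters, and your handling of both is sound.
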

\begin{Remark} \label{Remark1} {\rm 
\textit{Under the assumptions of Theorem \ref{I_T_theorem}, condition \eqref{ConditionH} is satisfied.} Indeed, suppose that one of the proper convex functions $f_1,f_2$ is continuous at a point $x^0$ belonging to the effective domain of the other. Then, one has $x^0\in {\rm{dom}}\, (f_1+f_2)$. It follows that $(f_1+f_2)^*(x^*)$ is everywhere greater than $-\infty$ for all $x^*\in X^*$.
	If  $x^* \notin {\rm \dom}\, (f_1+f_2)^*$, then $(f_1+f_2)^*(x^*)=+\infty$. Choose $\bar x^*_1,\, \bar x^*_2\in X^*$ such that $x^*=\bar x_1^*+\bar x_2^*$. By \eqref{sum_conjugate_function}, $+\infty=(f_1+f_2)^*(x^*)\le f_1^*(\bar x_1^*)+f_2^*(\bar x_2^*).$
Noting that $f_1^*(\bar x_1^*)>-\infty$ and $f_2^*(\bar x_2^*)>-\infty$ because $f_1,f_2$ are proper functions, from this we infer that at least one of the values  $f_1^*(\bar x_1^*)$ and $f_2^*(\bar x_2^*)$ must be $+\infty$. Combining this with \eqref{sum_conjugate_function} yields \eqref{Explain_H_1}. Since \eqref{Explain_H} is equivalent to \eqref{Explain_H_2}, and the latter is fulfilled. Thanks to Theorem \ref{I_T_theorem}, we have thus proved that the equality in \eqref{ConditionH} is satisfied for every $x^* \notin {\rm \dom}\, (f_1+f_2)^*$. If $x^* \in {\rm \dom}\, (f_1+f_2)^*$, then the equality in \eqref{ConditionH} follows immediately from Theorem \ref{I_T_theorem}.} 
\end{Remark}

In a Banach space setting, one has the following analogue of Theorem \ref{I_T_theorem}, where $f_1$ and $f_2$ must be assumed closed. 
Recall that $\Bbb{R_+(}A):=\{ta\in X \mid t \in \Bbb{R}_+ , \, a \in A\}$ and ${\rm int\,A}$, respectively, are the cone generated by the set $A$ and the interior of $A$. 
\begin{Theorem}
	{\rm (See \cite[Theorem 1.1]{AttouchBrezis1986}, \cite[Theorem 4.2 (ii)]{Mordukhovich_Nam_Rector_Tran})}
	\label{A_B_theorem} Let  $f_1,f_2: X \to \overline{\mathbb{R}}$ be proper closed convex functions defined on a Banach space $X$. Suppose that
	\begin{align}\label{AH_condition}
\Bbb{R}_+({\rm \dom}f_1-{\rm \dom}\, f_2)\ is\ a\ nonempty\ closed\ 
subspace\ of\ X. 
	\end{align}
	Then, for every $x^*\in X^*,$ one has 
	$(f_1+f_2)^*(x^*) = (f^*_1\oplus f^*_2) (x^*).$	Moreover, for any
	 $x^* \in {\rm dom}\, (f_1+f_2)^*$ there are $x_1^*,\,x_2^*\in X^*$ such that $x^*=x_1^*+x_2^*$ and $$(f_1+f_2)^*(x^*)=f_1^*(x_1^*)+f_2^*(x_2^*).$$
\end{Theorem}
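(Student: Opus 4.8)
Since the inequality $(f_1+f_2)^*(x^*)\le(f_1^*\oplus f_2^*)(x^*)$ is already recorded in \eqref{analysis_condition}, and both sides equal $+\infty$ when $x^*\notin\dom(f_1+f_2)^*$, the plan is, for a fixed $x^*\in\dom(f_1+f_2)^*$, to exhibit a splitting $x^*=x_1^*+x_2^*$ with $f_1^*(x_1^*)+f_2^*(x_2^*)=(f_1+f_2)^*(x^*)$. First I would normalise the data. Replacing $f_2$ by $f_2-\langle x^*,\cdot\rangle$ changes $(f_1+f_2)^*(x^*)$ into $(f_1+f_2)^*(0)$, carries each splitting of $x^*$ into a splitting of $0$ with the same value, and leaves $\dom f_2$---hence condition \eqref{AH_condition}---untouched, so we may take $x^*=0$. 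Condition \eqref{AH_condition} forces $\dom f_1\cap\dom f_2\ne\emptyset$ (a cone that is a subspace contains $0$, i.e. $0\in\dom f_1-\dom f_2$), so $m:=\inf(f_1+f_2)=-(f_1+f_2)^*(0)$ is finite; translating by a point of $\dom f_1\cap\dom f_2$ we may also assume $0\in\dom f_1\cap\dom f_2$, the extra conjugate terms being linear and cancelling throughout. Then $\dom f_1\cup\dom f_2\subset L:=\mathbb{R}_+(\dom f_1-\dom f_2)$, a closed---hence Banach---subspace; restricting $f_1,f_2$ to $L$ (they stay proper, closed, convex, and conjugation over $L^*$ agrees with conjugation over $X^*$ modulo $L^{\perp}$, so the infimal-convolution values and their attainment are unaffected) we reach the case $X=L$, i.e. $\dom f_1-\dom f_2$ is \emph{absorbing} at $0$.

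Next, set $\check f_2(y):=f_2(-y)$ and $q:=(f_1\oplus\check f_2)-m$, so that $q(z)=\inf_x\{f_1(x)+f_2(x-z)\}-m$ is a convex function with $q(0)=0$, $\dom q=\dom f_1-\dom f_2$ absorbing, and---because the conjugate of an infimal convolution is always the sum of the conjugates---$q^*(y^*)=f_1^*(y^*)+f_2^*(-y^*)+m$. Consequently, any $y^*\in\partial q(0)$ satisfies $q^*(y^*)=-q(0)=0$, i.e. $f_1^*(y^*)+f_2^*(-y^*)=-m$; since $(f_1+f_2)^*(0)=-m$ while $(f_1^*\oplus f_2^*)(0)\le f_1^*(y^*)+f_2^*(-y^*)$ by \eqref{analysis_condition}, all three quantities coincide and the infimum defining $(f_1^*\oplus f_2^*)(0)$ is attained at the splitting $0=y^*+(-y^*)$. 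Thus the theorem reduces to showing $\partial q(0)\ne\emptyset$.

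For this I would run a Baire-category argument. For each $\alpha>0$ the sublevel set $S_\alpha=\{q\le\alpha\}$ is convex, contains $0$, and---because $\dom q$ is absorbing and $q(\varepsilon z)\to q(0)=0$ as $\varepsilon\downarrow0$ for every $z$---one checks that $\bigcup_{n\in\mathbb{N}}nS_\alpha=X$. The step where genuine work is required, and the one I expect to be the main obstacle, is to promote this to a neighbourhood statement: using the completeness of $X$ one must show that the epigraph sum $\epi f_1+\epi\check f_2$ is norm-closed---equivalently, that $q$ is lower semicontinuous and the infimal convolution exact---which is precisely the Attouch--Br\'ezis lemma \cite[Theorem~1.1]{AttouchBrezis1986}, proved by an iterated ``open-mapping''-type series construction in $X$. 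Granting this, $S_\alpha$ is closed; being closed, convex and absorbing at $0$, it satisfies $0\in\operatorname{int}S_\alpha$ (Baire gives that some $nS_\alpha$, hence $S_\alpha$, has nonempty interior, and a standard segment argument then puts the absorbing point $0$ in that interior). Therefore $q$ is bounded above on a neighbourhood of $0$; as $q(0)=0$ is finite, the classical fact that a convex function bounded above near a point is locally Lipschitz there---no semicontinuity hypothesis needed---shows $q$ is continuous at $0$, hence $\partial q(0)\ne\emptyset$. Undoing the normalisations of the first paragraph then yields the theorem. (Alternatively, the closedness input, and with it the whole statement, can be extracted in one stroke from the Robinson--Ursescu open-mapping theorem for closed convex multifunctions applied to the epigraphical multifunction $z\mapsto\{(x,r):f_1(x)+f_2(x-z)\le r\}$; this is where completeness of $X$ is indispensable.)
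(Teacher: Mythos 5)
First, a point of comparison: the paper does not prove this theorem at all --- it is quoted verbatim from Attouch--Br\'ezis \cite[Theorem 1.1]{AttouchBrezis1986} and \cite[Theorem 4.2(ii)]{Mordukhovich_Nam_Rector_Tran}, so there is no in-paper proof to measure your argument against; I can only assess it on its own terms. Your overall architecture is the standard one and its reductions check out: normalising to $x^*=0$ and to $0\in\dom f_1\cap\dom f_2$ (including the point, which you state too tersely, that a convex set whose generated cone is a linear subspace must itself contain the origin --- this needs the convexity of $\dom f_1-\dom f_2$, not just the fact that a cone contains $0$), passing to the closed subspace $L=\Bbb{R}_+(\dom f_1-\dom f_2)$ so that $\dom f_1-\dom f_2$ becomes absorbing, forming $q=(f_1\oplus\check f_2)-m$ with $\check f_2=f_2(-\cdot)$, and observing that any $y^*\in\partial q(0)$ yields the exact splitting $0=y^*+(-y^*)$ with $f_1^*(y^*)+f_2^*(-y^*)=-m=(f_1+f_2)^*(0)$. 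So everything hinges, as you say, on $q$ being bounded above on a neighbourhood of $0$.

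That is exactly where the gap is, and your proposed repair does not work as stated. To run Baire on $S_\alpha=\{q\le\alpha\}$ you need $S_\alpha$ closed, i.e.\ $q$ lower semicontinuous; but $S_\alpha$ is essentially the projection onto the $z$-variable of the closed convex set $\{(x,z)\mid f_1(x)+f_2(x-z)\le\alpha+m\}$, and such projections need not be closed --- lower semicontinuity of an infimal convolution is not automatic and is not available a priori under \eqref{AH_condition}. Worse, you identify the missing closedness with ``the Attouch--Br\'ezis lemma [Theorem 1.1]'', which is the very theorem under proof, so the main line of your argument is circular as written. The actual proof avoids closedness altogether: it replaces $S_\alpha$ by the norm-truncated sets $C_\lambda=\{z\mid\exists x,\ \|x\|\le\lambda,\ f_1(x)\le\lambda,\ f_2(x-z)\le\lambda\}$, which are not closed but are cs-closed (any convergent convex series of their elements stays inside, because the $x$'s are bounded, $X$ is complete, and $f_1,f_2$ are lsc), and then uses the lemma that an absorbing cs-closed convex set in a Banach space is a neighbourhood of $0$. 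That substitution of cs-closedness for closedness is the one genuinely new idea of the theorem, and it is absent from your main line. Your parenthetical alternative --- Robinson--Ursescu applied to the closed convex multifunction $(x,r)\mapsto\{z\mid f_1(x)+f_2(x-z)\le r\}$, whose range is $\dom f_1-\dom f_2$ with $0$ in its core in $L$ --- is in fact a complete and legitimate route (openness at a linear rate at a point lying over $0$ places $0$ in ${\rm int}\,S_\alpha$ for some $\alpha>0$), but only because Robinson--Ursescu already packages the same cs-closedness/series construction; if you are permitted to cite it, it should be the main argument rather than a footnote.
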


Later we will also need another version of Theorem \ref{I_T_theorem}, where a geometrical regularity condition is employed.

\begin{Theorem} {\rm (See \cite[Theorem 2.171]{Bonnans_Shapiro_2000})}
 \label{B_S_theorem} Let  $f_1,f_2: X \to \overline{\mathbb{R}}$ be proper closed convex functions defined on a Banach space $X$. If the regularity condition 
\begin{align}\label{interior_condition}
0\in {\rm int}\,({\rm \dom}f_1-{\rm \dom}\, f_2)
\end{align}
is satisfied, then the equality 
$(f_1+f_2)^*(x^*) = (f^*_1\oplus f^*_2) (x^*)$ holds for every $x^*\in X^*$.
Moreover, if $x^*$ is such that the value $(f_1+f_2)^*(x^*)$ is finite, then the set of $ x_1^*$ satisfying $(f_1^* \oplus f_2^*)(x^*)=f_1^*(x_1^*)+f_2^*(x^*-x_1^*)$ is nonempty and weakly$^*$-compact.
\end{Theorem}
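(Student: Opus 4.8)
\medskip
\noindent\textbf{Proof idea.} The plan is to reduce the assertion to a single statement about a marginal function at the origin and then invoke the standard Banach-space calculus of continuous convex functions. The inequality $(f_1+f_2)^*(x^*)\le(f_1^*\oplus f_2^*)(x^*)$ already holds for every $x^*\in X^*$ by \eqref{analysis_condition}, so it is enough to show: for each $x^*$ at which $(f_1+f_2)^*(x^*)$ is \emph{finite} there is an $x_1^*\in X^*$ with $f_1^*(x_1^*)+f_2^*(x^*-x_1^*)\le(f_1+f_2)^*(x^*)$, and the set of such $x_1^*$ is weakly$^*$-compact. (When $(f_1+f_2)^*(x^*)=+\infty$ the equality is immediate from \eqref{analysis_condition}; note that \eqref{interior_condition} forces $\dom f_1\cap\dom f_2\neq\emptyset$, so $(f_1+f_2)^*$ never takes the value $-\infty$ and ``finite or $+\infty$'' covers all $x^*$.) Replacing $f_1$ by $x\mapsto f_1(x)-\langle x^*,x\rangle$ --- still proper, closed and convex, with the same effective domain, so that \eqref{interior_condition} is untouched --- reduces everything to the case $x^*=0$.

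For that case I would introduce the marginal function
\[
h(u):=\inf_{x\in X}\bigl[f_1(x+u)+f_2(x)\bigr]\qquad(u\in X).
\]
Elementary computations give: $h$ is convex; $\dom h=\dom f_1-\dom f_2$, hence $0\in\mathrm{int}(\dom h)$ by \eqref{interior_condition}; $h(0)=\inf_{x\in X}\bigl(f_1(x)+f_2(x)\bigr)=-(f_1+f_2)^*(0)\in\mathbb{R}$ in the case at hand; and, substituting $z=x+u$ in the supremum defining $h^*$, one obtains $h^*(p)=f_1^*(p)+f_2^*(-p)$ for every $p\in X^*$. Since $h(0)$ is finite and $0$ is interior to $\dom h$, $h$ cannot take the value $-\infty$ (a convex function equal to $-\infty$ at one point equals $-\infty$ on the whole interior of its domain), so $h$ is proper. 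Now $p\in\partial h(0)$ says $h(u)\ge h(0)+\langle p,u\rangle$ for all $u$, which is equivalent to $h^*(p)=-h(0)$, that is, to $f_1^*(p)+f_2^*(-p)=(f_1+f_2)^*(0)$; together with \eqref{analysis_condition} this shows both that the infimal convolution is exact at $0$ and that the set of $p$ realizing $(f_1^*\oplus f_2^*)(0)=f_1^*(p)+f_2^*(-p)$ is \emph{precisely} $\partial h(0)$. Thus the theorem follows once one knows that $\partial h(0)$ is nonempty and weakly$^*$-compact, after which one undoes the (weak$^*$-homeomorphic) translation.

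The crux is the single claim: \emph{under \eqref{interior_condition} the convex function $h$ is continuous at $0$}. Granting it, the classical theory of convex functions on Banach spaces (see, e.g., \cite{Zanlinescu_2002}) gives at once that $\partial h(0)$ is nonempty, convex, bounded and weakly$^*$-compact, which finishes the proof. To establish the continuity it suffices, by convexity and $h(0)\in\mathbb{R}$, to bound $h$ from above on a neighbourhood of $0$, and this is the step that uses the completeness of $X$ and the closedness of $f_1,f_2$. Here I would proceed as follows: the function $F(u,x):=f_1(x+u)+f_2(x)$ is proper, convex and lower semicontinuous on $X\times X$, its sublevel sets $\{F\le n\}$ are closed and convex, and $\dom h=\bigcup_{n\ge1}\mathrm{proj}_X\{F\le n\}$, which by \eqref{interior_condition} contains a closed ball about $0$; a Baire-category argument on the closures of the sets $\mathrm{proj}_X\{F\le n\}$ inside that ball yields a ball on which one of these closures lies in a sublevel set of $h$, and convexity then propagates a finite upper bound for $h$ to a neighbourhood of $0$.

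The one genuinely delicate point --- hence the main obstacle --- is the last transition: passing from the \emph{closure} of a projected sublevel set back to a set on which $h$ is actually finite. This is the standard infinite-dimensional difficulty (in finite dimensions a convex function is automatically continuous on the interior of its domain, so no regularity condition is needed), and it is exactly the mechanism underlying the Attouch--Brezis-type Theorem~\ref{A_B_theorem}; as the reference to \cite[Theorem 2.171]{Bonnans_Shapiro_2000} suggests, one may instead deduce the continuity of $h$ at $0$ from the stability theory of convex multifunctions (Robinson--Ursescu-type results) applied to $u\mapsto\{x:\ x+u\in\dom f_1,\ x\in\dom f_2\}$.
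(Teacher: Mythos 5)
First, a point of reference: the paper does not prove Theorem~\ref{B_S_theorem} at all --- it is imported verbatim from \cite[Theorem 2.171]{Bonnans_Shapiro_2000} --- so there is no in-paper argument to compare yours against. Your outline is nevertheless the standard route to this result, and its skeleton is sound: the reduction to $x^*=0$ by replacing $f_1$ with $f_1-\langle x^*,\cdot\rangle$ preserves properness, closedness and $\dom f_1$ (hence \eqref{interior_condition}); the identities $\dom h=\dom f_1-\dom f_2$, $h(0)=-(f_1+f_2)^*(0)$ and $h^*(p)=f_1^*(p)+f_2^*(-p)$ are all correct; and identifying the set of exact decompositions with $\partial h(0)$, then invoking the fact that a convex function continuous at a point has nonempty, convex, weakly$^*$-compact subdifferential there, correctly reduces everything to the continuity of $h$ at $0$.

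The gap is that this last step is the entire analytic content of the theorem, and the Baire argument as you sketch it does not close it. You pass to the closures of the sets ${\rm proj}_X\{F\le n\}$ and assert that one of them ``lies in a sublevel set of $h$''; but nothing forces $h$ to be finite, let alone bounded above, on the \emph{closure} of a projected sublevel set --- a convex set can be dense in a ball while having empty interior, which is exactly the pathology the completeness hypothesis must exclude. The standard repair is to truncate in the $x$-variable: set $C_n={\rm proj}_X\{(u,x)\mid F(u,x)\le n,\ \|x\|\le n\}$, so that still $\dom h=\bigcup_n C_n$ and $h\le n$ on $C_n$; Baire gives some $\overline{C_n}$ with nonempty interior, and the point is that $C_n$ is ideally convex (cs-closed): any convergent convex series $\sum_k\lambda_k u_k$ of its elements stays in it, because the bound $\|x_k\|\le n$ makes the companion series $\sum_k\lambda_k x_k$ absolutely convergent in the Banach space and lower semicontinuity of $F$ passes to the limit. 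For ideally convex sets in a Banach space one has ${\rm int}\,C_n={\rm int}\,\overline{C_n}$, and only then does convexity propagate a finite upper bound for $h$ to a neighbourhood of $0$. You do flag this as the delicate point and correctly name Robinson--Ursescu-type results as an alternative engine, but as written the proposal replaces the theorem's only nontrivial step by an appeal to an unproved claim; with the truncation and the ideal-convexity lemma inserted, the argument is complete.
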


 \begin{Remark}\label{Remark2} {\rm  \textit{Under the assumptions of Theorem \ref{A_B_theorem} (resp., of Theorem \ref{B_S_theorem}), condition \eqref{ConditionH} is satisfied.} Indeed, suppose that $f_1,f_2: X \to \overline{\mathbb{R}}$ are proper closed convex functions defined on a Banach space $X$, and \eqref{AH_condition} (resp., \eqref{interior_condition}) is fulfilled. We have $0\in {\rm \dom}f_1-{\rm \dom}\, f_2$. So, there is $x^0\in X$ with $x^0\in {\rm \dom}f_1\cap {\rm \dom}\, f_2$. Then $x^0\in {\rm{dom}}\, (f_1+f_2)$. Applying  Theorem \ref{A_B_theorem} (resp., Theorem \ref{B_S_theorem}) and the arguments already used in Remark \ref{Remark1}, we obtain \eqref{ConditionH}.}
\end{Remark}

We now show that assumption \eqref{ConditionH} is essential for Theorem \ref{sum_rule}.

\begin{ex}\rm
	Let $X=\mathbb{R}$,	$f_1(x)=0 $ for $ x = 0,$ and $f_1(x)=+ \infty$ for $ x\not=0.$  Define $f_2$ by setting $f_2(x)=	- \sqrt{x} $ for $ x \ge 0,$ and $f_2(x)=+ \infty $ for $ x<0.$ By a simple computation we obtain 
	$f^*_1(x^*)=	0$ for all $x^*\in \mathbb{R}$,
	and 
	$$
	f^*_2(x^*)=\begin{cases}
	-\dfrac{1}{4x^*} &\mbox{if}\ \, x^* < 0,\\+ \infty &\mbox{if}\ \, x^*\ge 0.
	\end{cases}
	$$
	Since $(f_1+f_2)(x)=0$ for $x = 0$ and $(f_1+f_2)(x)=+ \infty $ for $x \neq  0$,
	the equality
	$
	(f_1+f_2)^*(x^*)=0$ holds for every $x^*\in \mathbb{R}$.	So, for $x^*=0$, \eqref{Explain_H} holds, but the infimum on the right-hand side is not attained. This means that condition~\eqref{ConditionH} is not satisfied. For $\bar x=0$ and $\varepsilon>0$, the equality \eqref{SumRule} holds because $\partial_\varepsilon (f_1+f_2)(\bar x)=\mathbb R$, 
	$\partial_{\varepsilon_1}f_1(\bar x)=\mathbb R$ for every $\varepsilon_1\geq 0$, $\partial f_2(\bar x)=\emptyset$, and $\partial_{\varepsilon_2}f_2(\bar x)=\left( -\infty,\, -\frac{1}{4\varepsilon_2}\right]$ for every $\varepsilon_2>0$ (see Example \ref{Ex_sqrt_minus}). Nevertheless, for $\bar x=0$ and $\varepsilon=0$, the equality \eqref{SumRule} is violated because the left-hand side is $\mathbb R$, while the right-hand side is the empty set.
\end{ex}

 The sum rule \eqref{SumRule} requires the fulfillment of condition \eqref{ConditionH}, which is implied by the regularity conditions \eqref{MR_condition}, \eqref{AH_condition}, and \eqref{interior_condition} and the corresponding assumptions of Theorems \ref{I_T_theorem}, \ref{A_B_theorem}, and  \ref{B_S_theorem}. We now clarify the relationships between the regularity conditions  \eqref{MR_condition}, \eqref{AH_condition}, and \eqref{interior_condition}.

		\begin{Proposition} {\rm (See also \cite[Subsection 6.1]{AnYen2015})} Let $f_1,f_2: X \to \overline{\mathbb{R}}$ be proper closed convex functions defined on  a Hausdorff locally convex topological vector space $X$. Then,  \eqref{MR_condition} implies \eqref{AH_condition} and \eqref{interior_condition}.
		\end{Proposition}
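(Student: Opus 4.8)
The plan is to reduce the whole statement to one elementary fact from convex analysis: if a convex function is continuous at a point of its effective domain, then that point lies in the \emph{interior} of the effective domain. Concretely, I would assume without loss of generality that $f_1$ is continuous at a point $x^0$ belonging to $\dom f_2$ (the other case is completely symmetric). Continuity of $f_1$ at $x^0$ means in particular that $f_1(x^0)$ is finite and that there is a neighborhood $U\in\mathcal N(x^0)$ on which $f_1$ takes finite values; hence $U\subseteq\dom f_1$, and therefore $x^0\in{\rm int}\,(\dom f_1)$.

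From here I would obtain \eqref{interior_condition} in one line. Since $x^0\in\dom f_2$, we have $U-x^0\subseteq\dom f_1-\dom f_2$; as $U-x^0$ is a neighborhood of the origin, this yields $0\in{\rm int}\,(\dom f_1-\dom f_2)$, which is exactly \eqref{interior_condition}. For \eqref{AH_condition}, set $A:=\dom f_1-\dom f_2$. This set is convex, being a difference of two convex sets, and --- by what was just shown --- it contains a neighborhood of $0$, hence it is absorbing: for every $x\in X$ there is $t>0$ with $tx\in A$, so $x\in\mathbb R_+(A)$. Consequently $\mathbb R_+(A)=X$, which is trivially a nonempty closed subspace of $X$, and \eqref{AH_condition} follows.

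I do not expect any serious obstacle: once the first step is in place, the two implications are immediate, and properness of $f_1,f_2$ is used only to guarantee that the sets $\dom f_1,\dom f_2$, and hence $A$, are nonempty (closedness of the functions plays no role here). The only point that deserves to be written out carefully is precisely that first step, i.e.\ that continuity of $f_1$ at $x^0$ places $x^0$ in the topological interior of $\dom f_1$; this rests only on the fact that continuity at a point forces local finiteness of the function, and it is the hinge on which both \eqref{AH_condition} and \eqref{interior_condition} turn.
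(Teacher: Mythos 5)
Your proposal is correct and follows essentially the same route as the paper's proof: continuity of $f_1$ at $x^0$ yields a neighborhood $U$ of $x^0$ contained in ${\rm dom}\,f_1$, whence $U-x^0\subset {\rm dom}\,f_1-{\rm dom}\,f_2$ gives \eqref{interior_condition}, and the absorbing property then forces $\mathbb{R}_+({\rm dom}\,f_1-{\rm dom}\,f_2)=X$, giving \eqref{AH_condition}. No issues.
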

	\begin{proof}
	Without loss of generality, suppose that $f_1$ is continuous at a point $\bar x\in {\rm \dom}\, f_2$. Then, there exists a neighborhood $U$ of $0\in X$ such that $\bar x +U\subset {\rm \dom}\, f_1$. So, $U=(\bar x +U) - \bar x \subset {\rm \dom}\, f_1 -{\rm \dom}\, f_2.$ This yields \eqref{interior_condition} and the equality $$\Bbb{R}_+({\rm \dom}f_1-{\rm \dom}\, f_2)= X,$$ which justifies  \eqref{AH_condition}.
	\end{proof}
	
	The implication \eqref{interior_condition} $\Rightarrow$ \eqref{AH_condition} is obvious. Let us present two simple examples to show that the converse implication and the assertion \eqref{interior_condition} $\Rightarrow$ \eqref{MR_condition} are not true.

\begin{ex}\label{Example 3.2} {\rm
Let $X=\Bbb{R}^2$, $f_1(x)=x_1^2$ for all $x=(x_1,0)$, $f_1(x)=+\infty$ for all $x=(x_1,x_2)$ with $x_1\neq 0$, and $f_2\equiv f_1$. Then, $$\Bbb{R}_+({\rm \dom}f_1-{\rm \dom}\, f_2)={\rm \dom}f_1-{\rm \dom}\, f_2=\Bbb{R}\times\{0\}$$ is a closed subspace of $X$. However, both conditions \eqref{MR_condition} and \eqref{interior_condition} are violated.}
\end{ex}

\begin{ex}{\rm Let $X$ and $f_1$ be the same as in Example \ref{Example 3.2}. Put $f_2(x)=x_2^2$ for all $x=(0,x_2)$, $f_2(x)=+\infty$ for all $x=(x_1,x_2)$ with $x_2\neq 0$. Then  \eqref{interior_condition} is satisfied, but~\eqref{MR_condition} fails to hold.		
}\end{ex}

\section{Main results}
		\markboth{\centerline{\it Main Results}}{\centerline{\it D.T.V.~An
		and J.-C.~Yao}} \setcounter{equation}{0}
	
Differential stability of convex optimization problems with possibly empty solution sets in infinite-dimensional spaces is studied in this section. To make the presentation as clear as possible, we distinguish two cases:
 
a) unconstrained problems;

b) constrained problems.

\medskip
Let $X,Y$ be Hausdorff locally convex topological vector spaces and $\varphi: X \times Y \rightarrow \overline{\mathbb{R}}$ an extended real-valued function. 

		\subsection{Unconstrained convex optimization problems}
Consider the \textit{parametric unconstrained convex optimization problem}
					\begin{align}\label{math_program}
				\min\{\varphi(x,y)\mid y \in Y\}
				\end{align}
			 depending on the parameter $x$. The function $\varphi$ is called the \textit{objective function} of~\eqref{math_program}. The \textit{optimal value function}
			 $\mu: X \rightarrow \overline{\mathbb{R}}$ of \eqref{math_program} is
			 \begin{align}\label{marginalfunction}
			 \mu(x):= \inf \left\{\varphi (x,y)\mid y \in Y\right\}.
			 \end{align}
			 The \textit{solution set} of \eqref{math_program} is defined by
			$M(\bar x):=\{y \in Y\mid \mu(\bar x)= \varphi (\bar x,y)\}.$
	For $\eta > 0$, one calls $M_\eta(\bar x):= \{ y \in Y \mid \varphi (\bar x, y) \le \mu (\bar x)+\eta \}$
the \textit{approximate solution set} of \eqref{math_program}.

\medskip
		We now obtain formulas for the $\varepsilon$-subdifferential of $\mu(.)$. Since the following result was given in  \cite[Corollary~5]{MoussaouiSeeger1994} as a consequence of a more general result and in \cite[Theorem~2.6.2]{Zanlinescu_2002} with a brief proof, we will present a detailed, direct proof to make the presentation as clear as possible. Our arguments are based on a proof scheme of \cite{MoussaouiSeeger1994}. 
		
		\begin{Theorem}\label{Corolary5} {\rm (See  \cite[Corollary~5]{MoussaouiSeeger1994} and \cite[Theorem 2.6.2, p.~109]{Zanlinescu_2002})} Suppose that $\varphi: X \times Y \to \overline {\mathbb{R}}$ is a proper convex function and $\mu(\cdot)$ is finite at $\bar x \in X.$ Then, for every $\varepsilon \ge 0$, one has
		\begin{equation}\label{Epsilon_subdifferential2}
		\begin{array}{rcl}\partial_\varepsilon \mu(\bar x) &=& \bigcap\limits_{\eta\; >\;0} \ \bigcap\limits_{y \,\in\, M_\eta (\bar x)} \bigg\{x^* \in X^* \mid (x^*,0) \in \partial_{\varepsilon +\eta} \varphi (\bar x, y) \bigg\}\\
		&=&\bigcap\limits_{\eta\; >\;0} \ \bigcup\limits_{y \,\in\, Y} \bigg\{x^* \in X^* \mid (x^*,0) \in \partial_{\varepsilon +\eta} \varphi (\bar x, y) \bigg\}. 
		\end{array}
		\end{equation}
	In particular, 
	\begin{equation}\label{subdifferential}
		\begin{array}{rcl}	\partial \mu(\bar x) &=& \bigcap\limits_{\eta\; >\;0} \ \bigcap\limits_{y \,\in\, M_\eta (\bar x)} \bigg\{x^* \in X^* \mid (x^*,0) \in \partial_{\eta} \varphi (\bar x, y) \bigg\}\\
	&=&\bigcap\limits_{\eta\; >\;0} \ \bigcup\limits_{y \,\in\, Y} \bigg\{x^* \in X^* \mid (x^*,0) \in \partial_{\eta} \varphi (\bar x, y) \bigg\}.
	\end{array}
	\end{equation}
Moreover, if $M(\bar x)\not= \emptyset$, then for every $\varepsilon \ge 0$, one has
		\begin{align}\label{nonconstraint}
		\partial_\varepsilon \mu(\bar x) = \big\{x^* \in X^* \mid (x^*,0) \in \partial_{\varepsilon} \varphi (\bar x, y) \big\},
		\end{align}
		for all $y \in M(\bar x).$
		\end{Theorem}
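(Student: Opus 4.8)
The plan is to express the optimal value function $\mu$ as a partial conjugate and then translate the characterization of $\varepsilon$-subdifferentials via conjugates (Proposition~\ref{Proposition_1}(i)) into the stated formulas. First I would fix $\bar x$ with $\mu(\bar x)$ finite and $\varepsilon\geq 0$, and note the key identity $\mu^*(x^*)=\varphi^*(x^*,0)$, which follows directly from the definitions:
$$\mu^*(x^*)=\sup_{x\in X}\Big[\langle x^*,x\rangle-\inf_{y\in Y}\varphi(x,y)\Big]=\sup_{x\in X,\,y\in Y}\big[\langle x^*,x\rangle-\varphi(x,y)\big]=\varphi^*(x^*,0).$$
Then, by Proposition~\ref{Proposition_1}(i), $x^*\in\partial_\varepsilon\mu(\bar x)$ if and only if $\mu^*(x^*)+\mu(\bar x)\leq\langle x^*,\bar x\rangle+\varepsilon$, i.e. $\varphi^*(x^*,0)+\mu(\bar x)\leq\langle x^*,\bar x\rangle+\varepsilon$. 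The remaining task is to match this single inequality, which involves the infimum $\mu(\bar x)=\inf_y\varphi(\bar x,y)$, against the two intersection formulas over approximate solutions.

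For the first equality in \eqref{Epsilon_subdifferential2}, I would argue as follows. Suppose $x^*\in\partial_\varepsilon\mu(\bar x)$. Fix $\eta>0$ and $y\in M_\eta(\bar x)$, so $\varphi(\bar x,y)\leq\mu(\bar x)+\eta$. Using $\varphi^*(x^*,0)=\sup_{x,y'}[\langle x^*,x\rangle-\varphi(x,y')]\geq\langle x^*,x\rangle-\varphi(x,y')$ applied with $(x^*,0)$ as the functional on $X\times Y$, one checks that $(x^*,0)\in\partial_{\varepsilon+\eta}\varphi(\bar x,y)$ is equivalent to $\varphi^*(x^*,0)+\varphi(\bar x,y)\leq\langle x^*,\bar x\rangle+\varepsilon+\eta$ (again by Proposition~\ref{Proposition_1}(i), now for $\varphi$ on $X\times Y$ at $(\bar x,y)$ with the functional $(x^*,0)$); this follows by combining $\varphi^*(x^*,0)+\mu(\bar x)\leq\langle x^*,\bar x\rangle+\varepsilon$ with $\varphi(\bar x,y)\leq\mu(\bar x)+\eta$. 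Conversely, if $x^*$ lies in the right-hand side, then for every $\eta>0$ one may pick $y_\eta\in M_\eta(\bar x)$ (possible since $\mu(\bar x)$ is finite, so $M_\eta(\bar x)\neq\emptyset$), obtain $\varphi^*(x^*,0)+\varphi(\bar x,y_\eta)\leq\langle x^*,\bar x\rangle+\varepsilon+\eta$, and since $\varphi(\bar x,y_\eta)\geq\mu(\bar x)$ deduce $\varphi^*(x^*,0)+\mu(\bar x)\leq\langle x^*,\bar x\rangle+\varepsilon+\eta$; letting $\eta\downarrow 0$ gives $x^*\in\partial_\varepsilon\mu(\bar x)$.

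For the second equality in \eqref{Epsilon_subdifferential2}, the inclusion $\supseteq$ between the two right-hand sides is trivial since $M_\eta(\bar x)\subseteq Y$. For $\subseteq$, I would show that if $x^*$ belongs to the first (larger-looking) set, then for each $\eta>0$ and each $y\in Y$ with $(x^*,0)\in\partial_{\varepsilon+\eta}\varphi(\bar x,y)$ one has $\varphi(\bar x,y)<+\infty$, hence $y\in M_{\eta'}(\bar x)$ for a suitable $\eta'$; more directly, the argument of the previous paragraph shows that membership in $\bigcap_\eta\bigcup_{y\in Y}\{\dots\}$ already forces $\varphi^*(x^*,0)+\mu(\bar x)\leq\langle x^*,\bar x\rangle+\varepsilon+\eta$ for all $\eta>0$, hence $x^*\in\partial_\varepsilon\mu(\bar x)$, which by the first equality lands in the desired intersection. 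The specialization \eqref{subdifferential} is just $\varepsilon=0$. Finally, if $M(\bar x)\neq\emptyset$ and $y\in M(\bar x)$, then $\varphi(\bar x,y)=\mu(\bar x)$ exactly, and Proposition~\ref{Proposition_1}(i) gives directly $x^*\in\partial_\varepsilon\mu(\bar x)\iff\varphi^*(x^*,0)+\varphi(\bar x,y)\leq\langle x^*,\bar x\rangle+\varepsilon\iff(x^*,0)\in\partial_\varepsilon\varphi(\bar x,y)$, which is \eqref{nonconstraint}.

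The step I expect to require the most care is the bookkeeping in the conjugate characterization of $(x^*,0)\in\partial_{\varepsilon+\eta}\varphi(\bar x,y)$: one must be careful that the relevant conjugate of $\varphi$ at the functional $(x^*,0)\in X^*\times Y^*$ is precisely $\varphi^*(x^*,0)=\mu^*(x^*)$, independent of $y$, so that the whole family of conditions over $y\in M_\eta(\bar x)$ collapses to the single scalar inequality controlled by $\mu(\bar x)$. The passage to the limit $\eta\downarrow 0$ is where the approximate-solution formulation is genuinely needed, since $M(\bar x)$ itself may be empty and no exact minimizer is available to anchor the estimate.
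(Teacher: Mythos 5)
Your proposal is correct and follows essentially the same route as the paper's proof: the identity $\mu^*(x^*)=\varphi^*(x^*,0)$ combined with the conjugate characterization of $\varepsilon$-subdifferentials from Proposition~\ref{Proposition_1}(i), the chain of inclusions $\partial_\varepsilon\mu(\bar x)\subset\bigcap_\eta\bigcap_{y\in M_\eta(\bar x)}\{\cdots\}\subset\bigcap_\eta\bigcup_{y\in Y}\{\cdots\}\subset\partial_\varepsilon\mu(\bar x)$ (using nonemptiness of $M_\eta(\bar x)$), and the passage $\eta\downarrow 0$. The only difference is that you spell out the easy case $M(\bar x)\neq\emptyset$ explicitly, which the paper leaves implicit.
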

		\begin{proof}
 We put
\begin{align*}
&\mathcal{M}_\eta (\bar x)= \bigcap\limits_{y \,\in\, M_\eta (\bar x)} \bigg\{x^* \in X^* \mid (x^*,0) \in \partial _{\varepsilon +\eta} \varphi (\bar x, y)
 \bigg\},\\
 &\mathcal{N}_\eta (\bar x)= \bigcup\limits_{y \,\in\, Y} \bigg\{x^* \in X^* \mid (x^*,0) \in \partial _{\varepsilon +\eta} \varphi (\bar x, y)
  \bigg\}.
\end{align*}
Since $\mu(\bar x)=\inf\limits_{y\, \in\, Y} \varphi (\bar x, y)$ by \eqref{marginalfunction}, the set $M_\eta(\bar x)$ is nonempty for every $\eta >0$. Thus, one has $\mathcal{M}_\eta (\bar x)\subset \mathcal{N}_\eta (\bar x)$ for all $\eta >0$. Hence $\bigcap\limits_{\eta>0}\mathcal{M}_\eta (\bar x)\subset \bigcap\limits_{\eta>0} \mathcal{N}_\eta (\bar x)$. So, the equalities in \eqref{Epsilon_subdifferential2} will be proved, if we can show that
\begin{equation}
\label{ct1}
\partial_\varepsilon \mu(\bar x) \subset \bigcap\limits_{\eta\,>\,0}\mathcal{M}_\eta (\bar x)\end{equation} and
\begin{equation}\label{ct1n}
\bigcap\limits_{\eta\,>\,0} \mathcal{N}_\eta (\bar x) \subset \partial_\varepsilon \mu(\bar x).
\end{equation}
To prove \eqref{ct1}, take any $x^* \in \partial_\varepsilon \mu(\bar x)$, $\eta>0$, and $y \in M_\eta(\bar x)$. Thanks to the first assertion of Proposition~\ref{Proposition_1}, we know that $x^* \in \partial_\varepsilon \mu(\bar x)$ if and only if
\begin{align}
\label{ct2}
\mu(\bar x) + \mu^*(x^*) \le \langle x^*, \bar x \rangle +\varepsilon.
\end{align}
Adding $\eta>0$ to both sides of \eqref{ct2} yields
\begin{align}
\label{ct3}
\mu(\bar x) + \mu^*(x^*) + \eta\le \langle x^*, \bar x \rangle+\varepsilon +\eta.
\end{align}
Since $y \in M_\eta (\bar x)$, one has $\varphi(\bar x, y) \le \mu(\bar x)+\eta$. So, \eqref{ct3} gives
\begin{align}
\label{ct4}
\varphi(\bar x,y) + \mu^*(x^*) \le \langle x^*, \bar x \rangle +\varepsilon +\eta.
\end{align}
For every $v^* \in X^*$, we have $\mu^*(v^*)=\varphi^*(v^*,0).$ Indeed, by the definition of conjugate function, 
\begin{align*}
\mu^*(v^*)&= \sup\limits_{x \,\in\, X} \big \{ \langle v^*, x \rangle - \mu (x)\big \}\\ & = \sup\limits_{x \in X} \big \{ \langle v^*, x \rangle - \inf\limits_{y \,\in\, Y} \varphi (x,y)\big \}\\
&= \sup\limits_{(x,y) \,\in\, X\times Y} \big \{ \langle v^*, x \rangle - \varphi (x,y)\big \}\\
&= \sup\limits_{(x,y) \,\in \,X\times Y} \big \{ \langle (v^*,0), (x,y) \rangle - \varphi (x,y)\big \}\\
&= \varphi^*(v^*,0).
\end{align*}
Substituting $\mu^*(x^*)=\varphi^*(x^*,0)$	into \eqref{ct4}, one obtains
\begin{align}
\label{ct5}
\varphi(\bar x,y) + \varphi^*(x^*,0) \le \langle x^*, \bar x \rangle +\varepsilon +\eta.
\end{align}
According to Proposition \ref{Proposition_1},  inequality~\eqref{ct5} yields $(x^*,0)\in \partial_{\varepsilon +\eta} \varphi (\bar x, y)$ for all $\eta>0$ and $y \in M_\eta(\bar x).$ This means that $x^* \in \bigcap\limits_{\eta\,>\,0}\mathcal{M}_\eta (\bar x)$, so \eqref{ct1} is valid.

 Next, to prove \eqref{ct1n}, take any $x^* \in \bigcap\limits_{\eta>0}\mathcal{N}_\eta (\bar x)$. Then, for every $\eta>0$, there exists $y \in Y$ such that $(x^*,0)\in \partial_{\varepsilon +\eta} \varphi(\bar x, y)$. By Proposition \ref{Proposition_1}, this means that 
 $
 \varphi^*(x^*,0) +\varphi (\bar x,y) - \langle (x^*,0), (\bar x, y) \rangle \le \varepsilon +\eta.
 $
 The latter yields
 \begin{align}\label{ct6}
  \varphi^*(x^*,0) +\varphi (\bar x,y) - \langle x^*,\bar x \rangle \le \varepsilon +\eta.
  \end{align}
Since $ \varphi^*(x^*,0)=\mu^*(x^*)$ and $ \mu(\bar x) \le \varphi (\bar x, y)$, \eqref{ct6} implies
\begin{align}\label{ct7}
  \mu^*(x^*) +\mu (\bar x) - \langle x^*, \bar x \rangle \le \varepsilon +\eta.
  \end{align}  
  As \eqref{ct7} holds for every $\eta >0$, letting $\eta \to 0^+$ yields
  $ \mu^*(x^*) +\mu (\bar x) - \langle x^*, \bar x \rangle \le \varepsilon .$
  The last inequality shows that $x^* \in \partial_\varepsilon \mu(\bar x)$. Therefore, \eqref{ct1n} is fulfilled.
  
  Combining \eqref{ct1} and \eqref{ct1n} gives \eqref{Epsilon_subdifferential2}. For $\varepsilon=0$, from \eqref{Epsilon_subdifferential2} one obtains \eqref{subdifferential}.
	\end{proof}

 Next elementary property of the $\varepsilon$-subdifferential will be used latter on.		
 
\begin{Proposition}\label{times_subdifferential}
	Let $\varphi: X \times Y \to \mathbb{R}$ be a convex function. If $\varphi(x,y)= \varphi_1(x)+\varphi_2(y)$, where $\varphi_1: X \to \mathbb{R}$ and $\varphi_2: Y \to \mathbb{R}$ are convex functions then, 	for any $\varepsilon \ge 0$ and $(\bar x, \bar y) \in X \times Y$, one has
	\begin{align}\label{rule1}
	\partial_{\varepsilon} \varphi (\bar x, \bar y)\subset \partial_{\varepsilon}\varphi_1 (\bar x) \times\partial_{\varepsilon} \varphi_2 (\bar y)\subset \partial_{2\varepsilon} \varphi (\bar x, \bar y).
	\end{align}
\end{Proposition}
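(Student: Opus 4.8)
The plan is to prove both inclusions directly from Definition~2.1, exploiting the separable structure $\varphi(x,y)=\varphi_1(x)+\varphi_2(y)$; no sum rule or qualification condition is needed, and since $\varphi_1,\varphi_2$ are finite-valued the domain requirements in Definition~2.1 are automatic.

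For the first inclusion I would fix $(x^*,y^*)\in\partial_\varepsilon\varphi(\bar x,\bar y)$, so that
$$\langle x^*,x-\bar x\rangle+\langle y^*,y-\bar y\rangle\le \varphi_1(x)+\varphi_2(y)-\varphi_1(\bar x)-\varphi_2(\bar y)+\varepsilon$$
for all $(x,y)\in X\times Y$. Specializing to $y=\bar y$ removes the $Y$-terms and yields $\langle x^*,x-\bar x\rangle\le\varphi_1(x)-\varphi_1(\bar x)+\varepsilon$ for all $x\in X$, i.e.\ $x^*\in\partial_\varepsilon\varphi_1(\bar x)$; symmetrically, taking $x=\bar x$ gives $y^*\in\partial_\varepsilon\varphi_2(\bar y)$. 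Hence $(x^*,y^*)\in\partial_\varepsilon\varphi_1(\bar x)\times\partial_\varepsilon\varphi_2(\bar y)$.

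For the second inclusion I would take $x^*\in\partial_\varepsilon\varphi_1(\bar x)$ and $y^*\in\partial_\varepsilon\varphi_2(\bar y)$, write the two defining inequalities $\langle x^*,x-\bar x\rangle\le\varphi_1(x)-\varphi_1(\bar x)+\varepsilon$ (for all $x\in X$) and $\langle y^*,y-\bar y\rangle\le\varphi_2(y)-\varphi_2(\bar y)+\varepsilon$ (for all $y\in Y$), and add them. Using $\langle (x^*,y^*),(x,y)-(\bar x,\bar y)\rangle=\langle x^*,x-\bar x\rangle+\langle y^*,y-\bar y\rangle$ and $\varphi(x,y)-\varphi(\bar x,\bar y)=[\varphi_1(x)-\varphi_1(\bar x)]+[\varphi_2(y)-\varphi_2(\bar y)]$, the sum reads $\langle (x^*,y^*),(x,y)-(\bar x,\bar y)\rangle\le\varphi(x,y)-\varphi(\bar x,\bar y)+2\varepsilon$ for all $(x,y)\in X\times Y$, which is precisely $(x^*,y^*)\in\partial_{2\varepsilon}\varphi(\bar x,\bar y)$.

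There is no genuine obstacle: one direction is a one-line specialization and the other a one-line addition. The only points worth a word of care are the routine identification of the canonical pairing on $X^*\times Y^*$ with the sum of the two coordinate pairings, and the observation that the constant $2$ in $\partial_{2\varepsilon}$ cannot in general be improved to $\varepsilon$ when $\varepsilon>0$ (so the two inclusions are genuinely distinct), which one could highlight with an elementary one-dimensional example if desired.
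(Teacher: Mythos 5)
Your proof is correct and follows essentially the same route as the paper: the first inclusion by specializing $y=\bar y$ and $x=\bar x$ in the defining inequality, and the second by adding the two separate $\varepsilon$-subgradient inequalities to obtain the $2\varepsilon$ bound. You in fact write out the second inclusion in more detail than the paper, which merely states that it follows easily from the definition.
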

\begin{proof}
	Suppose that $(x^*, y^*) \in \partial_{\varepsilon} \varphi (\bar x, \bar y)$ for some $\varepsilon \ge 0$. Then, we have	
	\begin{align}
	\label{reprentation1}
	\langle (x^*,y^*), (x,y)-(\bar x, \bar y) \rangle \le \varphi (x,y) -\varphi(\bar x, \bar y)+\varepsilon, \ \forall (x,y)\in X \times Y.
	\end{align}
	By our assumption, \eqref{reprentation1} is equivalent to
	\begin{align}
	\label{reprentation2}
	\langle x^*, x-\bar x \rangle + \langle y^*, y -\bar y \rangle \le \varphi_1 (x) -\varphi_1(\bar x) +\varphi_2(y)-\varphi_2(\bar y)+\varepsilon, \ \forall (x,y)\in X \times Y.
	\end{align}
On one hand, substituting $y =\bar y$ into \eqref{reprentation2}, we get $x^* \in \partial\varphi_1(\bar x)$. On the other hand, taking $x =\bar x$, from \eqref{reprentation2} we have $y^* \in \partial\varphi_2(\bar y).$ Therefore, for any $ \varepsilon \ge 0$,
	$$ \partial_{\varepsilon} \varphi (\bar x, \bar y)\subset  \partial_{\varepsilon}\varphi_1 (\bar x) \times\partial_{\varepsilon} \varphi_2 (\bar y).$$
	The second inclusion in \eqref{rule1} can be obtained easily by the definition of $\varepsilon$-subdifferential. Thus \eqref{rule1} is valid.
\end{proof}

The following example is taken from \cite[pp.~93--94]{Hiriart_Lemarechal_1993}.

\begin{ex}\label{norm_function}\rm Let $f(x)=|x|$ for all $x\in\mathbb{R}$ and $\varepsilon\geq 0$. We have 
	\begin{align*}
	\partial _\varepsilon f(x)=
	\begin{cases}
	\left[-1,\, -1 -\dfrac{\varepsilon}{x}  \right] & \mbox{if} \ \,  x\,<\, \dfrac{- \varepsilon}{2},\\
	[-1, 1]& \mbox{if} \ \, \dfrac{- \varepsilon}{2} \,\le\, x \le \dfrac{\varepsilon}{2},\\
	\left[1-\dfrac{\varepsilon}{x}, \, 1 \right]& \mbox{if} \ \, x\,>\,\dfrac{ \varepsilon}{2}.
	\end{cases}
	\end{align*}
\end{ex}

We now give an illustration for Theorem~\ref{Corolary5}.

\begin{ex}\rm 
	Choose $X=Y=\mathbb{R}$, $\varphi(x,y)=x^2 +|y|$, and $\bar x=0$. Then the optimal value function \eqref{marginalfunction} of the parametric problem \eqref{math_program} is  $\mu(x)=x^2$. For any $\varepsilon \ge 0$, we have 
	\begin{align*}
	\partial_\varepsilon \mu (\bar x)=& \{x^*\in \mathbb{R}\mid x^* x \le x^2 + \varepsilon, \ \forall x \in \mathbb{R} \}\\
	= &\{ x^* \in \mathbb{R} \mid -x^2 +x^*x -\varepsilon \le 0, 
	\ \forall x\in \mathbb{R} \}\\
	=&\left[ -2 \sqrt{\varepsilon},\ 2 \sqrt{\varepsilon}\right].
	\end{align*}
	In this case, $\bar y =0 \in M(\bar x)$, so we will clarify equality \eqref{nonconstraint}. By Proposition \ref{times_subdifferential} one has $\partial_{\varepsilon} \varphi (\bar x,y)\subset\partial_{\varepsilon} \varphi_1 (\bar x) \times\partial_{\varepsilon } \varphi_2(y),$  where $\varphi_1(x)=x^2$ and $\varphi_2(y)=|y|$. On one hand, $\partial_{\varepsilon } \varphi_1(\bar x)=\left[ -2 \sqrt{\varepsilon},\ 2 \sqrt{\varepsilon} \right]$. On the other hand, according to Example \ref{norm_function},
	\begin{align*}
	\partial _{\varepsilon} \varphi_2(y)=
	\begin{cases}
	\left[-1, \ \, -1 -\dfrac{\varepsilon}{y}  \right] & \mbox{if} \ y\ <\ -\dfrac{ \varepsilon}{2},\\
	[-1,\ \, 1]& \mbox{if} \ -\dfrac{ \varepsilon}{2}\ \le\ y\ \le\ \dfrac{\varepsilon}{2},\\
	\left[1-\dfrac{\varepsilon}{y},\ \,  1 \right]& \mbox{if} \ y\ >\ \dfrac{ \varepsilon}{2}.
	\end{cases}
	\end{align*}
	Then, the right-hand side of \eqref{nonconstraint} can be computed as follows
	\begin{align*}
	RHS_{\eqref{nonconstraint}}
	= & \bigg\{ x^* \in \mathbb{R} \mid (x^*,0)\in \left[ -2 \sqrt{\varepsilon},\ 2 \sqrt{\varepsilon} \right] \times [-1, 1]
	\bigg\}\\
	=& \left[ -2 \sqrt{\varepsilon},\ 2 \sqrt{\varepsilon}\right].
	\end{align*}
Therefore, the conclusion of Theorem~\ref{Corolary5} is justified.
\end{ex}
\subsection{Constrained convex optimization problems}\label{main_subsection}
	Let $\varphi: X \times Y \rightarrow \overline{\mathbb{R}}$ be an extended real-valued funtion, $G: X \rightrightarrows Y$ a multifunction between Hausdorff locally convex topological vector spaces.	Consider the \textit{parametric optimization problem under an inclusion constraint}
			\begin{align}\label{math_program1}
		\min\{\varphi(x,y)\mid y \in G(x)\}
		\end{align}
	 depending on the parameter $x$. The function $\varphi$ (resp., the multifunction $G$) is called the \textit{objective function} (resp., the \textit{constraint multifunction}) of \eqref{math_program1}. The \textit{optimal value function}
	 $\mu: X \rightarrow \overline{\mathbb{R}}$ of \eqref{math_program1} is
	 \begin{align}\label{marginalfunction1}
	 \mu(x):= \inf \left\{\varphi (x,y)\mid y \in G(x)\right\}.
	 \end{align}
	 The usual convention $\inf \emptyset =+\infty$ forces $\mu(x)=+\infty$ for every $x \notin {\rm{dom}}\, G.$ 
	 The \textit{solution map}  $M: {\rm {dom}}\, G  \rightrightarrows Y $ of \eqref{math_program1} is defined by
		\begin{align*}
		M(x)=\{y \in G(x)\mid \mu(x)= \varphi (x,y)\}.
		\end{align*}
		The \textit{approximate solution set} of \eqref{math_program1} is given by 
		\begin{align}\label{solutionset}
		M_\eta(\bar x)= \{ y \in G(\bar x) \mid \varphi (\bar x, y) \le \mu (\bar x)+\eta \},\ \, \forall \eta > 0.
		\end{align} 
		
		\medskip	

We are now in a position to formulate the first main result of this subsection.  For any $\varepsilon\ge 0$ and $ \eta \ge 0$, define by  $\varGamma(\eta+\varepsilon)$ the set $$\varGamma(\eta+\varepsilon)=\{(\gamma_1, \gamma_2) \mid \gamma_1 \ge 0,\ \gamma_2 \ge 0, \ \gamma_1+\gamma_2=\eta+\varepsilon\}.$$
\begin{Theorem}
\label{main_result1} Let $\varphi: X \times Y \to \overline {\mathbb{R}}$ be a proper convex function, $G: X \rightrightarrows Y$ a convex multifunction. Suppose that the optimal value function $\mu(\cdot)$ in \eqref{marginalfunction1} is finite at $\bar x \in X.$ If at least one of the following regularity conditions is satisfied:\\
{\rm (a)} ${\rm int(\gph}\, G)\cap {\rm \dom}\, \varphi \not =\emptyset,$\\
{\rm (b)} $\varphi$ is continuous at a point $(x^0,y^0)\in {\rm gph}\, G,$
\\
then, for every $\varepsilon \ge 0$, we have
\begin{equation}\label{Main_formula}
\begin{split}
&\partial _\varepsilon \mu(\bar x)\\
&=
\bigcap\limits_{\eta >0} 
\ \bigcap\limits_{ y \,\in\, M_\eta (\bar x)}\
\bigcup_{(\gamma_1, \gamma_2)\, \in \, \varGamma(\eta+\varepsilon)}
\bigg\{x^* \mid (x^*,0) \in \partial_{\gamma_1}\varphi(\bar x,y) \!+\!N_{\gamma_2}\big ((\bar x,  y); {\rm gph}\, G \big ) \bigg\}\\
&=\bigcap\limits_{\eta >0} 
\ \bigcup\limits_{y\, \in \, Y}\
\bigcup_{(\gamma_1, \gamma_2) \,\in\, \varGamma(\eta+\varepsilon)}
\bigg\{x^* \mid (x^*,0) \in \partial_{\gamma_1}\varphi(\bar x, y) +N_{\gamma_2}\big ((\bar x, y); {\rm gph}\, G \big ) \bigg\},
  \end{split}
    \end{equation}
 where $M_\eta (\bar x)$ is given in \eqref{solutionset}.
 \end{Theorem}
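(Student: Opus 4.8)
The plan is to absorb the constraint into the objective, turning \eqref{math_program1} into an unconstrained problem of the form treated in Theorem~\ref{Corolary5}, and then to expand the resulting $\varepsilon$-subdifferential by the sum rule of Theorem~\ref{sum_rule}.

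First I would set $\psi:X\times Y\to\overline{\mathbb R}$, $\psi(x,y)=\varphi(x,y)+\delta\big((x,y);\gph G\big)$. Since $\mu(\bar x)$ is finite there is some $y\in G(\bar x)$ with $\varphi(\bar x,y)<+\infty$, so $\gph G$ is a nonempty convex set and $\dom\psi=\dom\varphi\cap\gph G\neq\emptyset$; as $\varphi$ is proper convex, $\psi$ is a proper convex function on $X\times Y$. Because $\delta((x,y);\gph G)$ equals $0$ for $y\in G(x)$ and $+\infty$ otherwise, one has $\inf_{y\in Y}\psi(x,y)=\inf_{y\in G(x)}\varphi(x,y)=\mu(x)$; thus $\mu$ is precisely the optimal value function of the unconstrained problem with objective $\psi$, and it is finite at $\bar x$. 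Furthermore the approximate solution set of this unconstrained problem, namely $\{y\in Y\mid \psi(\bar x,y)\le\mu(\bar x)+\eta\}$, coincides with $M_\eta(\bar x)$ from \eqref{solutionset}: indeed $\psi(\bar x,y)\le\mu(\bar x)+\eta<+\infty$ forces $(\bar x,y)\in\gph G$, and then $\psi(\bar x,y)=\varphi(\bar x,y)$.

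Next I would apply Theorem~\ref{Corolary5} to $\psi$, obtaining for every $\varepsilon\ge 0$
$$\partial_\varepsilon\mu(\bar x)=\bigcap_{\eta>0}\ \bigcap_{y\in M_\eta(\bar x)}\big\{x^*\mid (x^*,0)\in\partial_{\varepsilon+\eta}\psi(\bar x,y)\big\}=\bigcap_{\eta>0}\ \bigcup_{y\in Y}\big\{x^*\mid (x^*,0)\in\partial_{\varepsilon+\eta}\psi(\bar x,y)\big\}.$$
It remains to expand $\partial_{\varepsilon+\eta}\psi(\bar x,y)$ by the sum rule, for which I would check the qualification condition \eqref{ConditionH} for the pair $f_1=\varphi$, $f_2=\delta(\cdot;\gph G)$ on $X\times Y$. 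Under (b), $\varphi$ is continuous at $(x^0,y^0)\in\gph G=\dom f_2$, i.e.\ at a point of the effective domain of $f_2$. Under (a), the indicator of a convex set is continuous exactly at its interior points, so $\delta(\cdot;\gph G)$ is continuous at any point of ${\rm int}(\gph G)\cap\dom\varphi$, i.e.\ at a point of $\dom f_1$. In both cases \eqref{MR_condition} holds for $(f_1,f_2)$, hence by Theorem~\ref{I_T_theorem} and Remark~\ref{Remark1} condition \eqref{ConditionH} is satisfied. Since $\eta>0$ we have $\varepsilon+\eta>0$, so Theorem~\ref{sum_rule} applies (even when $\varepsilon=0$) and, using $\partial_{\gamma_2}\delta(\cdot;\gph G)=N_{\gamma_2}(\cdot;\gph G)$, it yields
$$\partial_{\varepsilon+\eta}\psi(\bar x,y)=\bigcup_{(\gamma_1,\gamma_2)\in\varGamma(\eta+\varepsilon)}\big[\partial_{\gamma_1}\varphi(\bar x,y)+N_{\gamma_2}\big((\bar x,y);\gph G\big)\big]$$
for every $(\bar x,y)\in\dom\varphi\cap\gph G$ (for $y$ outside this set both sides are empty, so no restriction on the union is needed). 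Distributing the membership condition $(x^*,0)\in(\cdot)$ over the union and substituting into the two displayed expressions for $\partial_\varepsilon\mu(\bar x)$ gives exactly \eqref{Main_formula}.

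The main obstacle I anticipate is not conceptual but a matter of careful bookkeeping: verifying that the geometric regularity conditions (a) and (b) genuinely reduce to the continuity-type qualification \eqref{MR_condition} for the pair $(\varphi,\delta(\cdot;\gph G))$ — in particular the standard but essential fact that the indicator of a convex set is continuous precisely at its interior points — and tracking the index $\varepsilon+\eta>0$ so that Theorem~\ref{sum_rule}, which requires a strictly positive parameter, remains applicable uniformly in $\eta>0$, including in the case $\varepsilon=0$.
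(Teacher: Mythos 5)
Your proposal is correct and follows essentially the same route as the paper: rewrite $\mu$ as the optimal value function of the unconstrained problem with objective $\varphi+\delta(\cdot;\gph G)$, apply Theorem~\ref{Corolary5}, verify that (a) or (b) yields the continuity-type qualification \eqref{MR_condition} for the pair $(\varphi,\delta(\cdot;\gph G))$ so that \eqref{ConditionH} holds via Theorem~\ref{I_T_theorem} and Remark~\ref{Remark1}, and then expand $\partial_{\varepsilon+\eta}$ of the sum by Theorem~\ref{sum_rule} using $\partial_{\gamma_2}\delta(\cdot;\gph G)=N_{\gamma_2}(\cdot;\gph G)$. Your extra bookkeeping (checking that the approximate solution sets coincide, and that $\varepsilon+\eta>0$ keeps the sum rule applicable when $\varepsilon=0$) is consistent with, and slightly more explicit than, the published argument.
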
	
\begin{proof} (This proof is based on Theorems \ref{sum_rule} and \ref{Corolary5}.)
 We apply Theorem \ref{Corolary5} to the case where $\varphi (x,y)$ plays the role of $\big (\varphi +\delta (\cdot; {\rm \gph}\, G)\big )(x,y)$. Hence
\begin{equation}\label{N_Formula_1}
\begin{split}
		\partial_\varepsilon \mu(\bar x)& = \bigcap\limits_{\eta >0} 
		\ \bigcap\limits_{ y \,\in\, M_\eta (\bar x)}\bigg\{x^* \in X^* \mid (x^*,0) \in \partial_{\varepsilon +\eta} \big (\varphi +\delta (\cdot; {\rm \gph}\, G)\big ) (\bar x,  y) \bigg\}\\
	&	=\bigcap\limits_{\eta >0} \ \bigcup\limits_{  y\, \in \, Y} \bigg\{x^* \in X^* \mid (x^*,0) \in \partial_{\varepsilon +\eta} \big (\varphi +\delta (\cdot; {\rm \gph}\, G)\big ) (\bar x,  y) \bigg\}.
		\end{split}
		\end{equation}
We will show that 
\begin{align}\label{N_Formula_2}
\partial_{\varepsilon +\eta}\big (\varphi +\delta (\cdot; {\rm \gph}\, G)\big ) (\bar x,  y)\!= \! \bigcup_{(\gamma_1, \gamma_2) \,\in\, \varGamma(\eta+ \varepsilon)}
  \bigg\{ \partial_{\gamma_1}\varphi(\bar x, y)\! +\!N_{\gamma_2}((\bar x, y); {\rm gph}\, G)  \bigg\},
\end{align}		
 where $ \varGamma(\eta+ \varepsilon)=\{(\gamma_1, \gamma_2) \mid \gamma_1 \ge 0,\ \gamma_2 \ge 0,\ \gamma_1+\gamma_2=\varepsilon+\eta\}.$  
 Indeed, suppose that at least one of the regularity conditions (a) or (b) is fulfilled. Since ${\rm \gph}\,G$ is convex, $\delta(\cdot; {\rm \gph}\,G): X \times Y \to \overline {\mathbb{R}}$ is convex. Obviously, $\delta(\cdot; {\rm \gph}\,G)$ is continuous at every point belonging to ${\rm int (\gph\, G)}$. Hence, if the regularity condition (a) is satisfied, then $\delta(\cdot; {\rm \gph}\,G)$ is continuous at a point in ${\rm \dom}\, \varphi$. Consider the case where the regularity condition (b) is fulfilled. Since ${\rm \dom}\delta(\cdot; {\rm \gph}\,G)={\rm \gph}\,G $. From (b), it follows that $\varphi$ is continuous at a point in ${\rm \dom}\delta(\cdot; {\rm \gph}\,G)$. So, in both cases, thanks to Theorem~\ref{I_T_theorem} and Remark \ref{Remark1}, the qualification condition \begin{align}
 \label{RegularityCondition}
 \nonumber
 \big(\varphi &+\delta(\cdot; {\rm \gph}\,G)\big)^*(x^*,y^*) \\
 & =\min \bigg\{ \varphi ^*(x_1^*,y_1^*) +\delta^*((x_2^*,y_2^*);{\rm \gph}\,G)\mid(x^*,y^*)=(x_1^*,y_1^*)+(x_2^*,y_2^*)  \bigg \}
 \end{align}
 holds for all $(x^*,y^*)\in X^*\times Y^*$. So, all assumptions of Theorem \ref{sum_rule} are satisfied. Therefore,
\begin{displaymath}
 \partial_{\varepsilon+\eta} \big (\varphi +\delta (\cdot; {\rm \gph}\, G)\big )(\bar x, y)=
 \bigcup_{(\gamma_1, \gamma_2) \,\in\, \varGamma(\eta+ \varepsilon)} \bigg \{  \partial_{\gamma_1}\varphi(\bar x,  y)+ \partial_{\gamma_2}\delta((\bar x, y); {\rm \gph}\, G)\bigg \},
 \end{displaymath}
 for any $(\bar x, y) \,\in\, {\rm dom}\, \varphi \cap\, {\rm\gph}\, G$. Moreover, $\partial_{\gamma_2}\delta\big ((\bar x, y); {\rm \gph}\, G\big )= N_{\gamma_2}\big ((\bar x, y); {\rm gph}\, G \big ).$ Combining \eqref{N_Formula_1} with \eqref{N_Formula_2}, we obtain the statement of the theorem.
\end{proof}

The second main result of this section reads as follows.

\begin{Theorem}
Let $G :X\rightrightarrows Y$ be a convex multifunction between Banach spaces, whose graph is closed, and $\varphi: X \times Y \to \overline {\mathbb{R}}$ a proper closed convex  function. Suppose that the optimal value function $\mu(\cdot)$ in \eqref{marginalfunction1} is finite at $\bar x \in X.$ Assume that either 
{\rm (i)} 
${\Bbb{R_+}}( {\rm \dom}\, \varphi- {\rm \gph}\, G)$ is a closed subspace of $X \times Y$,

\noindent or

\noindent {\rm (ii)}  $
(0,0)\in {\rm int}( {\rm \dom}\, \varphi- {\rm \gph}\, G),
$ 

\noindent then \eqref{Main_formula} is valid.
\end{Theorem}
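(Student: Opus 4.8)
The plan is to run exactly the same argument as in the proof of Theorem~\ref{main_result1}, with the single modification that the qualification condition required by the sum rule is now produced by the Banach-space results, Theorem~\ref{A_B_theorem} (under hypothesis (i)) and Theorem~\ref{B_S_theorem} (under hypothesis (ii)), together with Remark~\ref{Remark2}, in place of Theorem~\ref{I_T_theorem} and Remark~\ref{Remark1}. Throughout I would fix the notation $f_1:=\varphi$ and $f_2:=\delta(\cdot;\gph G)$ on the Banach space $X\times Y$, and recall that $\mu(x)=\inf_{y\in Y}(f_1+f_2)(x,y)$, so that the constrained problem \eqref{marginalfunction1} is literally an unconstrained problem with objective $f_1+f_2$.

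First I would check that $f_1+f_2$ is admissible in Theorem~\ref{Corolary5}: it is convex as a sum of convex functions, and it is proper because $\varphi$ is proper (hence everywhere $>-\infty$) while finiteness of $\mu(\cdot)$ at $\bar x$ furnishes some $y\in G(\bar x)$ with $\varphi(\bar x,y)<+\infty$, i.e.\ $(\bar x,y)\in\dom\varphi\cap\gph G=\dom(f_1+f_2)\neq\emptyset$; and $\mu(\bar x)$ is finite by assumption. Applying Theorem~\ref{Corolary5} with $f_1+f_2$ in the role of the objective function then yields the representation \eqref{N_Formula_1}. Next I would verify the qualification condition \eqref{ConditionH} for the pair $(f_1,f_2)$. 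Here $f_2=\delta(\cdot;\gph G)$ is proper (since $\gph G\supseteq\{\bar x\}\times G(\bar x)\neq\emptyset$), closed (since $\gph G$ is closed, by hypothesis), and convex (since $G$ is a convex multifunction), and $\dom f_2=\gph G$; hence $\dom f_1-\dom f_2=\dom\varphi-\gph G$. Consequently hypothesis (i) is precisely condition \eqref{AH_condition}, and hypothesis (ii) is precisely condition \eqref{interior_condition}, for this pair. By Remark~\ref{Remark2}---whose only additional ingredient, $\dom f_1\cap\dom f_2\neq\emptyset$, was already secured above---condition \eqref{ConditionH} holds for $f_1$ and $f_2$, so all hypotheses of Theorem~\ref{sum_rule} are in force.

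It remains to invoke Theorem~\ref{sum_rule}: for every $(\bar x,y)\in\dom\varphi\cap\gph G$ and all $\varepsilon,\eta\ge 0$ it gives
\[
\partial_{\varepsilon+\eta}(f_1+f_2)(\bar x,y)=\bigcup_{(\gamma_1,\gamma_2)\in\varGamma(\eta+\varepsilon)}\big\{\partial_{\gamma_1}\varphi(\bar x,y)+\partial_{\gamma_2}\delta((\bar x,y);\gph G)\big\},
\]
and, using $\partial_{\gamma_2}\delta((\bar x,y);\gph G)=N_{\gamma_2}((\bar x,y);\gph G)$, this is exactly \eqref{N_Formula_2}; substituting \eqref{N_Formula_2} into \eqref{N_Formula_1} produces \eqref{Main_formula}. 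I do not expect any genuine obstacle: the argument is a transcription of the proof of Theorem~\ref{main_result1}. The only step deserving care is the translation of the hypotheses---confirming that (i) and (ii) are verbatim the regularity conditions \eqref{AH_condition} and \eqref{interior_condition} for the pair $(\varphi,\delta(\cdot;\gph G))$ (which hinges on $\dom\delta(\cdot;\gph G)=\gph G$), and checking that the implicit nonemptiness requirement in Remark~\ref{Remark2} is automatically met here because $\mu(\bar x)$ is finite. (In fact finiteness of $\mu$ at $\bar x$ could be weakened throughout to $\dom\varphi\cap\gph G\neq\emptyset$ together with $\mu(\bar x)>-\infty$, which is all that is used.)
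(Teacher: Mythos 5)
Your proposal is correct and follows essentially the same route as the paper: apply Theorem~\ref{Corolary5} to $\varphi+\delta(\cdot;\gph G)$, verify the qualification condition \eqref{ConditionH} via Theorem~\ref{A_B_theorem} (for (i)) or Theorem~\ref{B_S_theorem} (for (ii)) together with Remark~\ref{Remark2}, and then invoke the sum rule of Theorem~\ref{sum_rule} exactly as in the proof of Theorem~\ref{main_result1}. In fact you supply more detail (properness of the sum, the identification $\dom\delta(\cdot;\gph G)=\gph G$) than the paper's own brief proof does.
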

\begin{proof}  
	The proof is similar to that of Theorem \ref{main_result1}. Having in hands the subdifferential representation for the optimal value function in Theorem \ref{Corolary5}, we apply therein the subdifferential sum rule from Theorem \ref{sum_rule} under the corresponding conditions~(i) and~(ii). Namely, if the condition (i) (resp. (ii)) is satisfied, using Theorem~\ref{A_B_theorem} (resp. Theorem~\ref{B_S_theorem}) and remembering Remark~\ref{Remark2}, then we obtain \eqref{RegularityCondition}. In other words, all assumptions of Theorem \ref{sum_rule} are satisfied.
Thus, by the same manner as in Theorem~\ref{main_result1}, we can obtain the conclusion of the theorem.
\end{proof}

\subsection{An application}
In this section, we will present an illustrative example for the result in Subsection~\ref{main_subsection}. This example is designed for the case graph of the constraint mapping is a convex cone.
\medskip

 We have the following property about $\varepsilon$-normal directions of a convex cone.

\begin{Proposition}{\rm (See \cite[Example 2.1]{Hiriart_Urruty_1989})}\label{Property_normal_cone} Let $C$ be a convex cone with apex 0. Then one has for all $\bar x \in C$ and all $\varepsilon \ge 0$ the equality
	\begin{align*}
		N_\varepsilon (\bar x; C)= \{x^* \in C^0 \mid \langle x^*, \bar x \rangle \ge - \varepsilon \}.
	\end{align*}
	In particular, 
	$N_\varepsilon (\bar x; C)=N(\bar x; C)$ for $\bar x=0.$
\end{Proposition}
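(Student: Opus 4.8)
The plan is to prove Proposition~\ref{Property_normal_cone} directly from the definition of $\varepsilon$-normal directions, exploiting the cone structure of $C$. Let $C$ be a convex cone with apex $0$, fix $\bar x\in C$ and $\varepsilon\ge 0$. By definition,
$$N_\varepsilon(\bar x;C)=\{x^*\in X^*\mid \langle x^*,x-\bar x\rangle\le\varepsilon,\ \forall x\in C\}.$$
First I would establish the inclusion $N_\varepsilon(\bar x;C)\subset\{x^*\in C^0\mid\langle x^*,\bar x\rangle\ge-\varepsilon\}$. Take $x^*\in N_\varepsilon(\bar x;C)$. Since $0\in C$, plugging $x=0$ gives $\langle x^*,-\bar x\rangle\le\varepsilon$, i.e. $\langle x^*,\bar x\rangle\ge-\varepsilon$. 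To see $x^*\in C^0$, fix any $x\in C$; by the cone property $tx\in C$ for all $t>0$, so $\langle x^*,tx-\bar x\rangle\le\varepsilon$, i.e. $t\langle x^*,x\rangle\le\varepsilon+\langle x^*,\bar x\rangle$ for all $t>0$. Dividing by $t$ and letting $t\to+\infty$ forces $\langle x^*,x\rangle\le 0$; since $x\in C$ was arbitrary, $\langle x^*,x\rangle\le 0$ for all $x\in C$, which is exactly $x^*\in C^0$ (recall $C^0=\{x^*\mid\langle x^*,x\rangle\le 1,\ \forall x\in C\}$, and for a cone this is equivalent to $\langle x^*,x\rangle\le 0$ for all $x\in C$ — a point I would note explicitly, using $tx\in C$ again).

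Next I would prove the reverse inclusion. Suppose $x^*\in C^0$ with $\langle x^*,\bar x\rangle\ge-\varepsilon$; equivalently $\langle x^*,x\rangle\le 0$ for all $x\in C$ and $-\langle x^*,\bar x\rangle\le\varepsilon$. For any $x\in C$,
$$\langle x^*,x-\bar x\rangle=\langle x^*,x\rangle-\langle x^*,\bar x\rangle\le 0+\varepsilon=\varepsilon,$$
so $x^*\in N_\varepsilon(\bar x;C)$. This gives the claimed equality $N_\varepsilon(\bar x;C)=\{x^*\in C^0\mid\langle x^*,\bar x\rangle\ge-\varepsilon\}$.

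For the final assertion, I would specialize to $\bar x=0$. Then the condition $\langle x^*,\bar x\rangle\ge-\varepsilon$ reads $0\ge-\varepsilon$, which holds for every $\varepsilon\ge 0$ and every $x^*$, so it is vacuous; hence $N_\varepsilon(0;C)=C^0$ for all $\varepsilon\ge 0$. In particular $N_\varepsilon(0;C)=N_0(0;C)=N(0;C)$, and indeed $N(0;C)=C^0$ for a convex cone with apex $0$. Alternatively one can obtain $N_\varepsilon(0;C)=N(0;C)$ from Proposition~\ref{proposition_normal_set}(ii). There is no real obstacle here; the only point requiring a little care is the passage, valid precisely because $C$ is a cone, from the polar-set condition $\langle x^*,x\rangle\le 1$ on $C$ to the homogeneous condition $\langle x^*,x\rangle\le 0$ on $C$ — this is where the cone hypothesis is genuinely used, and I would spell it out rather than leave it implicit.
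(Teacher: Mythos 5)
Your proof is correct and follows essentially the same route as the paper's: substitute $x=0$ to get $\langle x^*,\bar x\rangle\ge-\varepsilon$, use the cone structure with a scaling/limit argument ($t\to+\infty$) to show $x^*\in C^0$, and verify the reverse inclusion by adding the two inequalities. The only cosmetic difference is that you test with $x=tx$ while the paper tests with $x=\bar x+ty$; both exploit the same homogeneity, and your explicit remark that the polar set of a cone reduces to the homogeneous condition $\langle x^*,x\rangle\le 0$ matches a clarification the paper itself makes elsewhere.
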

\begin{proof}
	For all $\varepsilon \ge 0$, take any $x^* \in	N_\varepsilon (\bar x; C)$. By the definition of $\varepsilon$-normal directions, we have
	\begin{align}\label{normal_cone}
	\langle x^*, x- \bar x \rangle \le \varepsilon, \ \, \forall x \in C.
	\end{align}
	Substituting $x=0$, we get $\langle x^*, \bar x \rangle \ge - \varepsilon$. Moreover, since $C$ is a convex cone, $\bar x + ty \in C$, for all $t >0$, $y \in C$. Now taking $x=\bar x + ty$, \eqref{normal_cone} yields
	\begin{align}\label{normal_cone1}
	t \langle x^*, y \rangle \le \varepsilon, \ \, \forall y \in C.
	\end{align}
	Dividing two sides of \eqref{normal_cone1} by $t>0$ and letting $t \to +\infty$, we obtain $\langle x^*, y \rangle \le 0,$ for all $y \in C$. The latter means that $x^* \in C^0$.
	
	Now suppose that $x^* \in C^0$ and $\langle x^*, \bar x \rangle \ge - \varepsilon $ for every $\varepsilon \ge 0$. Given any $x \in C$, we have $\langle x^*, x \rangle \le 0$. Combining this with $\langle x^*, \bar x \rangle \ge - \varepsilon $, we obtain $x^* \in N_\varepsilon (\bar x; C).$
	\end{proof}

\medskip
We can easily get the following property of $\varepsilon$-subdifferentials.
\begin{Proposition}\label{properties} 
	Let $f: X \rightarrow \Bar{\Bbb{R}}$ be a proper convex function. Then, for any $\varepsilon \ge 0$ and $\bar x \in {\rm{dom}}\,f$ we have
 $\partial_\varepsilon (\lambda f)(\bar x)=\lambda \partial_{\varepsilon/\lambda} f(\bar x)$ for every $\lambda >0.$
\end{Proposition}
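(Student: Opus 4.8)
The plan is to prove the scaling identity $\partial_\varepsilon(\lambda f)(\bar x)=\lambda\,\partial_{\varepsilon/\lambda}f(\bar x)$ directly from the definition of the $\varepsilon$-subdifferential, by a straightforward change of variable. Since $\lambda>0$ and $\bar x\in\dom f$, one has $\bar x\in\dom(\lambda f)$, so both sides are well-defined subsets of $X^*$. I would first unwind the left-hand side: $x^*\in\partial_\varepsilon(\lambda f)(\bar x)$ means $\langle x^*, x-\bar x\rangle\le \lambda f(x)-\lambda f(\bar x)+\varepsilon$ for all $x\in X$.

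Next I would divide this inequality by $\lambda>0$ (which preserves the inequality) to obtain $\langle \lambda^{-1}x^*, x-\bar x\rangle\le f(x)-f(\bar x)+\varepsilon/\lambda$ for all $x\in X$, which is precisely the statement that $\lambda^{-1}x^*\in\partial_{\varepsilon/\lambda}f(\bar x)$. Reading the chain of equivalences backwards gives $x^*\in\partial_\varepsilon(\lambda f)(\bar x)\iff x^*\in\lambda\,\partial_{\varepsilon/\lambda}f(\bar x)$, which is the claimed equality. One should remark that $\varepsilon/\lambda\ge 0$ whenever $\varepsilon\ge 0$, so the right-hand side is a legitimate $(\varepsilon/\lambda)$-subdifferential; the case $\varepsilon=0$ recovers the familiar positive-homogeneity of ordinary subdifferentials.

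There is no real obstacle here: every step is an equivalence, and the only thing to be careful about is that multiplication and division by $\lambda$ are monotone because $\lambda>0$ (the statement genuinely fails for $\lambda<0$, and for $\lambda=0$ the function $\lambda f$ is identically $0$ so the formula is vacuous/ill-posed, which is why the hypothesis $\lambda>0$ is imposed). Alternatively, one could derive the same identity from Proposition~\ref{Proposition_1}(i) using $(\lambda f)^*(x^*)=\lambda f^*(x^*/\lambda)$, but the direct argument from the definition is shorter and self-contained, so that is the route I would take.
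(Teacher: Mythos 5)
Your proof is correct: the paper states this proposition without any proof (remarking only that it is easily obtained), and your direct unwinding of the definition of the $\varepsilon$-subdifferential, dividing the defining inequality by $\lambda>0$ to get the equivalence $x^*\in\partial_\varepsilon(\lambda f)(\bar x)\Leftrightarrow \lambda^{-1}x^*\in\partial_{\varepsilon/\lambda}f(\bar x)$, is exactly the intended elementary argument. Every step is a genuine equivalence (and the extended-real arithmetic is harmless since $\lambda\cdot(+\infty)=+\infty$ for $\lambda>0$ while $f(\bar x)$ is finite), so both inclusions follow at once.
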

Let us consider an illustrative example for Theorem~\ref{main_result1}.
\begin{ex}{\rm
		Let $X=Y=\Bbb{R}$ and $\bar x=0$. Consider the optimal value function $\mu(x)$ in~\eqref{marginalfunction1}
		with $ \varphi(x,y)=|y|$ and $G(x)=\left\{y \mid y \ge \frac{1}{2}|x|\right\}$ for all $x\in\Bbb{R}$. Then we have $\mu(x)=\frac{1}{2}|x|$ for all $x\in\Bbb{R}$. From Example~\ref{norm_function} and Proposition~\ref{properties}, for any $\varepsilon \ge 0$ one has $\partial_\varepsilon \mu( \bar x)=[-\frac{1}{2},\frac{1}{2}]$. On one hand, for all $\eta >0$, $\gamma_1 \ge 0$, we get
		 \begin{align*}
		M_\eta(\bar x)=\{y \in G(\bar x) \mid \varphi(\bar x, y) \le \mu(\bar x) + \eta \}=\{0\},
		 \end{align*}
and 
\begin{align*}
\partial_{\gamma_1}\varphi(\bar x, y)\subset
\begin{cases}
\{0\}\times \left[-1, \ \, -1 -\dfrac{\gamma_1 }{y}  \right] & \mbox{if} \ y\ <\ -\dfrac{ \gamma_1 }{2},\\
\{0\}\times [-1,\ \, 1]& \mbox{if} \ -\dfrac{ \gamma_1 }{2}\ \le\ y\ \le\ \dfrac{\gamma_1 }{2},\\
\{0\}\times \left[1-\dfrac{\gamma_1 }{y},\ \,  1 \right]& \mbox{if} \ y\ >\ \dfrac{ \gamma_1 }{2}.
\end{cases}
\end{align*}
On the other hand, by Proposition~\ref{Property_normal_cone}, we have 
\begin{align*}
N_{\gamma_2} ((\bar x, y); {\rm gph}\,G )= \begin{cases}
\{(0,0)\} & \mbox{if}\ \,  y>0,\\
\left\{(x^*,y^*)\in \mathbb{R}^2 \mid y^* \le - 2|x^*| \right\}& \mbox{if}\ \, y=0,\\
\emptyset & \mbox{if}\ \, y <0.
\end{cases}
\end{align*}
 \begin{figure}[htbp]
	\centering
	\includegraphics[scale=.55]{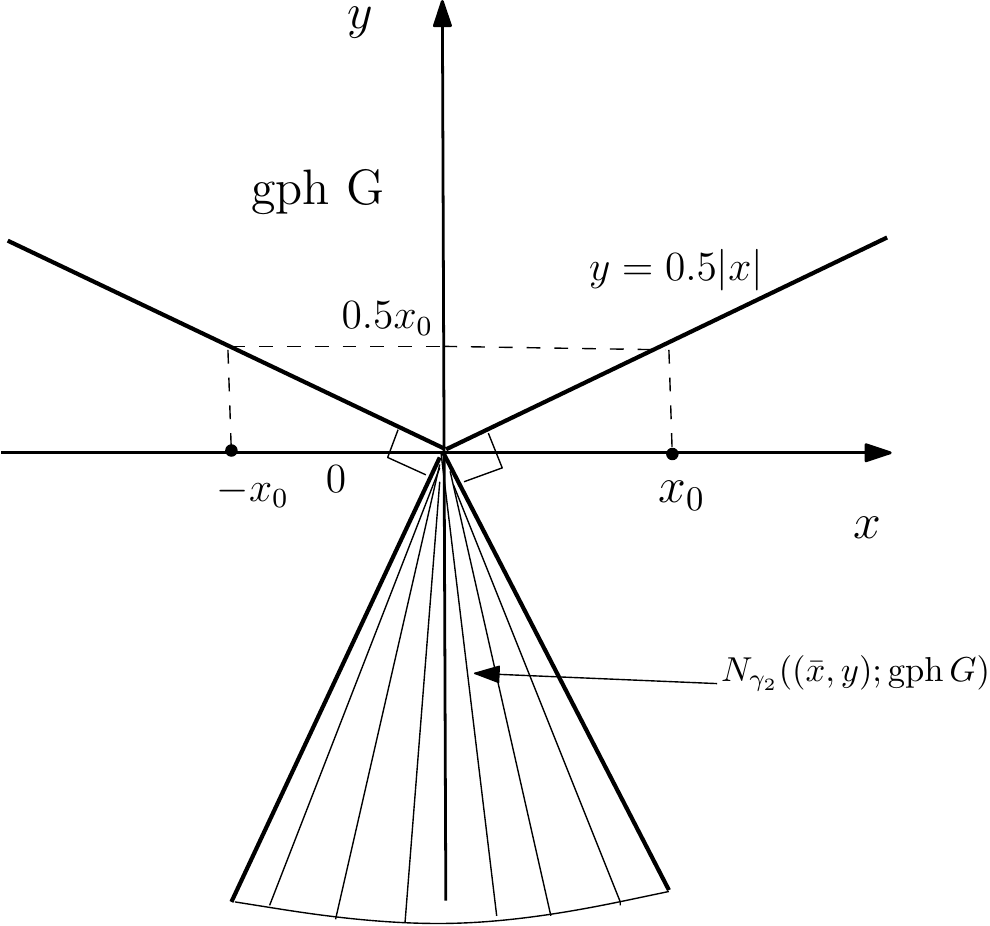}
	\centerline{Figure 2: The $\gamma_2$-normal directions of ${\rm gph\,}G$.}
\end{figure}
So, the right-hand-side of \eqref{Main_formula} can be computed as follows
\begin{align*}
RHS_{\eqref{Main_formula}}= &\bigcap\limits_{\eta \,>\,0} \ \bigcup\limits_{y\, \ge\,  0}\ \bigcup\limits_{(\gamma_1, \gamma_2)\, \in\, \Gamma (\eta+\varepsilon)} \big\{x^*\in X^* \mid (x^*,0)\! \in\! \partial_{\gamma_1}\varphi(\bar x, y) \!+\!N_{\gamma_2}\big ((\bar x,  y); {\rm gph}\, G) \big ) \big\}\\
= &\bigcap\limits_{\eta \,>\,0} \ \bigcup\limits_{(\gamma_1, \gamma_2)\, \in\, \Gamma (\eta+\varepsilon)} \big\{x^*\in X^* \mid (x^*,0)\! \in\! \partial_{\gamma_1}\varphi(\bar x,\bar y) \!+\!N_{\gamma_2}\big ((\bar x, \bar y); {\rm gph}\, G) \big ) \big\}\\
=&  \bigg\{ x^* \in \mathbb{R} \mid (x^*,0)\in \{0\} \times [-1,1] + \left\{(x^*,y^*)\in \mathbb{R}^2 \mid y^* \le - 2|x^*| \right\}
\bigg\}\\
=&  \bigg\{ x^* \in \mathbb{R} \mid \{x^*\}\times [-1,1]\in \big\{(x^*,y^*)\in \mathbb{R}^2 \mid y^* \le - 2|x^*| \big\}
\bigg\}\\
=&\left[-\frac{1}{2},\frac{1}{2}\right].
\end{align*}
This justifies the conclusion of Theorem~\ref{main_result1}.
	}
\end{ex}

{\footnotesize \noindent \textbf{Acknowledgements.} The research of Duong Thi Viet An was supported by Thai Nguyen University of Sciences and the Vietnam Institute for Advanced Study in Mathematics (VIASM). The research of Jen-Chih Yao was supported by  the Grant MOST 105-2221-E-039-009-MY3.
The authors would like to thank Prof. Nguyen Dong Yen for useful comments and suggestions.
}

\end{document}